\newtheorem{thm}{Theorem}[section]
\newtheorem{lem}[thm]{Lemma}
\newtheorem{cor}[thm]{Corollary}
\newtheorem{hypothesis}{Hypothesis}[section]
\def\square{\vbox{
      \hrule height 0.4pt
      \hbox{\vrule width 0.4pt height 5.5pt \kern 5.5pt \vrule width 0.4pt}
      \hrule height 0.4pt}}
\def\id{\mathrm{id}}
\def\Ker{\mathrm{K er}}
\def\ch\mathrm{c h}
\newcommand{\Brun}{\mathrm{Brun}}
\newcommand{\R}{\ensuremath{\mathbb{R}}}
\newcommand{\RP}{\mathbb{R}\mathrm{P}}
\let\la=\langle
\let\ra=\rangle
\numberwithin{equation}{section}
\begin{document}

\newcommand{\auths}[1]{\textrm{#1},}
\newcommand{\artTitle}[1]{\textsl{#1},}
\newcommand{\jTitle}[1]{\textrm{#1}}
\newcommand{\Vol}[1]{\textbf{#1}}
\newcommand{\Year}[1]{\textrm{(#1)}}
\newcommand{\Pages}[1]{\textrm{#1}}

\author{J. Y. Li$^*$}
\address{ Institute of Mathematics and Physics, Shijiazhuang Railway Institute, Shijiazhuang 050043, China}\email{yanjinglee@163.com}
\author{J. Wu$^{\dag}$}
\address{Department of Mathematics\\
National University of Singapore\\
Block S17 (SOC1)\\
10, Lower Kent Ridge Road\\
Republic of Singapore}\email{matwuj@nus.edu.sg}
\urladdr{http://www.math.nus.edu.sg/\~{}matwujie}

\thanks{$^*$Partially supported by the National Natural Science Foundation of China 10971050}
\thanks{$^\dag$Partially supported by the Academic Research Fund
of the National University of Singapore R-146-000-101-112}

\subjclass[2000]{Primary 55Q40,20F12; Secondary 20F36, 57M25}

\keywords{symmetric commutator subgroup, homotopy group, link group, Brunnian braid, free group, surface group}

\begin{abstract}
In this paper, we investigate some applications of commutator
subgroups to homotopy groups and geometric groups. In particular, we
show that the intersection subgroups of some canonical subgroups in
certain link groups modulo their symmetric commutator subgroups are
isomorphic to the (higher) homotopy groups. This gives a connection
between links and homotopy groups. Similar results hold for braid
and surface groups.
\end{abstract}

\title[On symmetric commutator subgroups and homotopy groups]{On symmetric commutator subgroups, braids, links and homotopy groups}

\maketitle

\section{Introduction}

The purpose of this article is to investigate some applications of
commutator subgroups to homotopy groups and geometric groups.

Recall ~\cite[p. 288-289]{MKS} that \textit{a bracket arrangement of weight $n$} in a group $G$ is a map
$\beta^n\colon G^n\to G$ which is defined inductively as follows:
$$\beta^1=\id_G,\,\, \beta^2(a_1,a_2)=[a_1,a_2]$$
for any $a_1,a_2\in G$, where $[a_1,a_2]=a_1^{-1}a_2^{-1}a_1a_2$. Suppose that the bracket arrangements of weight $k$ are
defined for $1\leq k<n$ with $n\geq 3$. A map $\beta^n\colon G^n\to G$ is called
a bracket arrangement of weight $n$ if $\beta^n$ is the composite
\begin{diagram}
G^n=G^k\times G^{n-k}&\rTo^{\beta^k\times\beta^{n-k}}&G\times G&\rTo^{\beta^2}&G
\end{diagram}
for some bracket arrangements $\beta^k$ and $\beta^{n-k}$ of weight $k$ and
$n-k$, respectively, with $1\leq k<n$. For instance, if $n=3$, there are two bracket arrangements given by
$[[a_1,a_2],a_3]$ and $[a_1,[a_2,a_3]]$.

Let $R_{j}$ be a sequence of subgroups of $G$ for
 $1\leq j\leq n$. The \textit{fat commutator subgroup}  $[[R_{1}, R_2, \dots,R_{n}]]$ is defined to be the subgroup of $G$ generated by all of
the commutators
\begin{equation}\label{equation1.1}
\beta^t(g_{i_1},\ldots,g_{i_t}),
\end{equation}
where
\begin{enumerate}
\item[1)] $1\leq i_s\leq n$;
\item[2)] $\{i_1,\ldots,i_t\}=\{1,\ldots,n\}$, that is each integer in $\{1,2,\cdots,n\}$ appears as at least one of the
integers $i_s$;
\item[3)] $g_j\in R_j$;
\item[4)] $\beta^t$ runs over all of the  bracket arrangements of weight $t$ (with $t\geq n$).
\end{enumerate}
For convenience, let $[[R_1]]=R_1$. The fat commutator subgroups have important applications in homotopy theory~\cite{BCWW,BL,EM,Wu1}. Let $G=F_n$ be the free group of rank $n$ with a basis $x_1,\ldots,x_n$. Let $R_i=\la\la x_i\ra\ra$ be the normal closure of $x_i$ in $F_n$ for $1\leq i\leq n$ and let $R_{n+1}=\la\la x_1\cdots x_n\ra\ra$ be the normal closure of the product element $x_1x_2\cdots x_n$ in $F_n$. According to~\cite[Theorem 1.7]{Wu1}, the homotopy group $\pi_{n+1}(S^2)$ is isomorphic to the quotient group $$ (R_1\cap R_2\cap\cdots\cap R_{n+1})/[[R_1,R_2,\ldots,R_{n+1}]]$$ for each $n$.

To understand the fat commutator subgroup, we consider the
\textit{symmetric commutator subgroup}\footnote{Symmetric commutator
subgroup was named by Roman Mikhailov during the communications.}
$[[R_1, R_2],\ldots,R_n]_S$ defined by
$$
[[R_1,R_2],\ldots,R_n]_S=\prod_{\sigma\in \Sigma_n}[[R_{\sigma(1)},R_{\sigma(2)}],\ldots,R_{\sigma(n)}],
$$
where $[[R_{\sigma(1)},R_{\sigma(2)}],\ldots, R_{\sigma(n)}]$ is the subgroup generated by the left iterated commutators
$$
[[[g_1,g_2],g_3],\ldots,g_n]
$$
with $g_i\in R_{\sigma(i)}$. For convenience, let $[R_1]_S=R_1$. From the definition, the symmetric commutator subgroup is a subgroup of the fat commutator subgroup. Our first result states that the fat commutator subgroup is in fact the same as the symmetric commutator subgroup.

\begin{thm}\label{theorem1.1}
Let $R_j$ be any normal subgroup of a group $G$ with $1\leq j\leq n$. Then
$$
[[R_1,R_2,\ldots,R_n]]=[[R_1,R_2],\ldots,R_n]_S.
$$
\end{thm}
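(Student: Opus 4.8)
The inclusion $[[R_1,R_2],\ldots,R_n]_S\subseteq[[R_1,R_2,\ldots,R_n]]$ is immediate: a left-iterated commutator $[[[g_1,g_2],g_3],\ldots,g_n]$ with $g_i\in R_{\sigma(i)}$ is precisely a generator $\beta^n(g_{i_1},\ldots,g_{i_n})$ of the fat commutator subgroup for the left-normed bracket arrangement and an index sequence in which each of $1,\ldots,n$ occurs exactly once, so conditions (1)--(4) hold. The whole content of the theorem is therefore the reverse inclusion, and the plan is to show that every generator $\beta^t(g_{i_1},\ldots,g_{i_t})$ of $[[R_1,R_2,\ldots,R_n]]$ lies in $S:=[[R_1,R_2],\ldots,R_n]_S$.

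I would first record two consequences of normality that make $S$ tractable. Since each $R_i$ is normal and conjugation is an automorphism commuting with the formation of commutators, each subgroup $[[R_{\sigma(1)},R_{\sigma(2)}],\ldots,R_{\sigma(n)}]$ is normal in $G$; hence $S$ is normal, and one may argue modulo $S$, freely conjugating and using the congruences $[a,bc]\equiv[a,c]\,[a,b]^{c}$ and $[ab,c]\equiv[a,c]^{b}\,[b,c]\pmod S$. Second, normality gives $[R_i,R_j]\subseteq R_i\cap R_j$ for all $i,j$, and more generally any left-normed commutator with entries in $R_{j_1},\ldots,R_{j_m}$ lies in $R_{j_1}\cap\cdots\cap R_{j_m}$. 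This \emph{intersection property} is what will let me reinterpret an inner commutator as an element of any one of the groups whose generators occur in it.

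The argument then has two reduction steps, carried out by induction on the weight $t$. Step one converts an arbitrary bracket arrangement into left-normed ones: writing $\beta^t=[\,u,v\,]$ with $u,v$ bracket arrangements and applying the Hall--Witt identity to push brackets out of the right-hand factor $v$, every generator is rewritten, modulo $S$, as a product of left-normed commutators $[[[h_1,h_2],h_3],\ldots,h_s]$ whose entries still range over all of $R_1,\ldots,R_n$ (each index still represented) with $s\ge n$; the conjugating factors produced by Hall--Witt are harmless because $S$ is normal. Step two lowers the weight to exactly $n$. Given a left-normed generator of weight $s>n$ covering all indices, some index is repeated; using the intersection property I collapse an innermost commutator $[h_1,h_2]$ into a single letter, declaring it an element of $R_{a}$ for a suitably chosen $a\in\{i_1,i_2\}$, which turns a weight-$s$ left-normed commutator into a weight-$(s-1)$ left-normed commutator. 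Choosing $a$ so that the discarded index still occurs among the remaining entries keeps all of $1,\ldots,n$ represented, and iterating drives the weight down to $n$, at which point the commutator is a generator of some $[[R_{\sigma(1)},R_{\sigma(2)}],\ldots,R_{\sigma(n)}]\subseteq S$.

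The main obstacle is the bookkeeping in Step two: the innermost pair $\{i_1,i_2\}$ need not contain a repeated index, so before collapsing one must be able to permute the outer entries of a left-normed commutator to bring a redundant index inward. This requires a swapping identity of the shape $[[a,b],c]\equiv[[a,c],b]\pmod{S}$ (a group-level Jacobi identity), and verifying that the resulting correction terms are themselves left-normed commutators covering all indices -- so that they are absorbed by $S$ or handled at the same or a lower stage of the induction -- is the delicate point on which the proof turns. It is precisely here that normality of the $R_i$, through the intersection property and the normality of $S$, is indispensable; without it neither the relabelling of inner commutators nor the disposal of the Hall--Witt conjugates would be available.
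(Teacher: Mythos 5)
Your easy inclusion and your diagnosis of where the difficulty lies are both correct, but the mechanism you propose for the hard direction has a genuine gap, and it sits exactly at the point you yourself label ``delicate.'' Write $S=[[R_1,R_2],\ldots,R_n]_S$. Your Step two collapses the \emph{innermost} pair $[h_1,h_2]$ into a single letter of $R_{i_1}$ or $R_{i_2}$; this destroys index coverage precisely when neither $i_1$ nor $i_2$ recurs among $h_3,\ldots,h_s$, and your repair is the swap $[[a,b],c]\equiv[[a,c],b]\pmod{S}$. That congruence is not available at that stage of your induction: by the Hall--Witt identity the error term is a conjugate of a commutator of the shape $[[b,c],a]$, where in your situation $a$ is the left-normed inner part of weight at least $2$. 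This error term has the \emph{same} total weight, covers the \emph{same} index set, and is no longer left-normed, so proving that it lies in $S$ is an instance of the very statement you are in the middle of proving; an induction on weight alone supplies no measure under which it is ``lower.'' The same circularity already threatens your Step one, since rewriting an arbitrary bracket arrangement as a product of left-normed ones modulo $S$ produces correction terms of the same kind. As organized, the induction does not terminate.

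The paper closes this loop with two devices that are absent from your sketch. First, it argues at the level of subgroups rather than elements, so that Hall's three-subgroup theorem together with $[AB,C]=[A,C][B,C]$ for normal subgroups can be applied wholesale; the inner inductions (in its Lemmas 2.3 and 2.5) are on the weight of the \emph{right-hand factor} of the decomposition $\beta^t=[\beta^p,\beta^{t-p}]$, which is the quantity that genuinely decreases each time Hall's theorem is applied, so termination is automatic. Second, and crucially, there is an outer induction on $n$: when a generator splits as $[\beta^p,\beta^{t-p}]$ with each factor covering a proper subset of $\{1,\ldots,n\}$, the theorem for fewer subgroups identifies each factor with an element of a smaller symmetric commutator subgroup, and a separate lemma (Lemma 2.5, itself proved by a double induction) handles the commutator of two such elements. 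Your sketch contains no induction on $n$ at all. Finally, note how the paper treats the purely left-normed case (Lemma 2.4): it peels the \emph{outermost} entry rather than collapsing inward. If the last index recurs earlier, the inner part already covers all indices and normality gives $[S,R_{i_p}]\leq S$; if not, the inner part covers exactly $n-1$ indices, and the induction on $n$ plus distributivity finishes. Peeling outward never destroys index coverage, which is precisely the defect of inward collapsing that forced you into the unsupported swap.
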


The symmetric commutator subgroup can be simplified as follows:

\begin{thm}\label{theorem1.2}
Let $R_j$ be any normal subgroup of a group $G$ with $1\leq j\leq n$. Then
$$
[[R_1,R_2],\ldots,R_n]_S=\prod_{\sigma\in \Sigma_{n-1}}[[R_{1},R_{\sigma(2)}],\ldots,R_{\sigma(n)}],
$$
where $\Sigma_{n-1}$ acts on $\{2,3,\ldots,n\}$.
\end{thm}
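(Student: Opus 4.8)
The plan is to prove only the non-trivial inclusion. Since $\Sigma_{n-1}$, viewed as the stabiliser of $1$, sits inside $\Sigma_n$, the right-hand side is visibly contained in $[[R_1,R_2],\ldots,R_n]_S$, so everything reduces to showing that for every $\sigma\in\Sigma_n$ the generating subgroup $[[R_{\sigma(1)},R_{\sigma(2)}],\ldots,R_{\sigma(n)}]$ lies in $T_n:=\prod_{\tau\in\Sigma_{n-1}}[[R_1,R_{\tau(2)}],\ldots,R_{\tau(n)}]$. Throughout I would lean on four elementary facts, all valid because the $R_j$ are normal: (i) $[A,B]=[B,A]$ and each such $[A,B]$ is again normal; (ii) $[AB,C]\subseteq[A,C]\cdot[B,C]$; and (iii) the three-subgroups inclusion $[[A,B],C]\subseteq[[A,C],B]\cdot[[B,C],A]$ extracted from the Hall--Witt identity. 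Each of these keeps us inside products of iterated commutators of the $R_j$.

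The core is an induction on $n$ whose job is to push the factor $R_1$ to the front of a left-normed bracket. Write the given bracket as $[C,R_{\sigma(n)}]$ with $C=[[R_{\sigma(1)},\ldots],R_{\sigma(n-1)}]$. If $\sigma(n)\neq 1$, then $R_1$ already occurs inside $C$, which is a left-normed commutator in the $n-1$ normal subgroups $R_{\sigma(1)},\ldots,R_{\sigma(n-1)}$; the inductive hypothesis rewrites $C$ as a product of left-normed brackets each beginning with $R_1$, and commuting this product with $R_{\sigma(n)}$ through (ii) keeps $R_1$ in first place, landing us in $T_n$. The remaining case $\sigma(n)=1$ is where the work lies. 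Writing $C=[D,R_{\sigma(n-1)}]$ and applying (iii) to $[[D,R_{\sigma(n-1)}],R_1]$ splits it as $[[D,R_1],R_{\sigma(n-1)}]\cdot[[R_{\sigma(n-1)},R_1],D]$; after the swaps $[D,R_1]=[R_1,D]$ and $[R_{\sigma(n-1)},R_1]=[R_1,R_{\sigma(n-1)}]$ both summands are commutators of an $R_1$-headed bracket against the still-nested bracket $D$.

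To absorb such terms I would isolate a linearisation lemma: if $L=[[R_1,R_{j_2}],\ldots,R_{j_a}]$ is any left-normed bracket beginning with $R_1$ (allowing $a=1$, i.e.\ $L=R_1$) and $M$ is a left-normed bracket in the complementary indices, then $[L,M]$ lies in a product of single left-normed chains that still begin with $R_1$ and use all the indices occurring in $L$ and $M$. This is proved by a secondary induction on the length of $M$: writing $M=[M',R_l]$ and applying (iii) to $[[M',R_l],L]$ yields one term $[[L,M'],R_l]$, in which $M'$ is shorter and which is finished by the inner inductive hypothesis together with (ii), and one term $[[L,R_l],M']$, in which $L$ has grown to the longer $R_1$-headed chain $[L,R_l]$ while $M'$ has shrunk, so the induction on the length of $M$ closes. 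Feeding $L=R_1$ (then commuting with $R_{\sigma(n-1)}$ via (ii)) into the first summand, and $L=[R_1,R_{\sigma(n-1)}]$, $M=D$ into the second, disposes of both terms and completes the induction on $n$.

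The main obstacle is precisely the case $\sigma(n)=1$: moving $R_1$ from the outermost slot to the front unavoidably manufactures a commutator of two nested commutators, and the only genuine content of the argument is showing that such a commutator of commutators re-expands into honest left-normed chains without $R_1$ drifting out of first place. Everything else---the first subcase and the repeated use of $[A,B]=[B,A]$ to normalise the innermost pair---is bookkeeping. The delicate point is to arrange the two nested inductions, the outer one on $n$ and the inner one on the length of $M$, so that every application of the three-subgroups inclusion strictly decreases the relevant complexity and the recursion terminates.
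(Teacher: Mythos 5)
Your proposal is correct, but its engine is genuinely different from the paper's. Both arguments dispose of the easy inclusion immediately and reduce to showing that each left-normed bracket $[[R_{\tau(1)},R_{\tau(2)}],\ldots,R_{\tau(n)}]$ lies in $\prod_{\sigma\in\Sigma_{n-1}}[[R_{1},R_{\sigma(2)}],\ldots,R_{\sigma(n)}]$, and both use Hall's theorem to trade a bracket ending in $R_1$ for terms in which $R_1$ sits further inside. The paper then runs a second induction on the position $k$ of the index $1$ in $\tau$, moving $R_1$ one slot to the left per step; the error term $[[R_{\tau(k-1)},R_1],[[R_{\tau(1)},R_{\tau(2)}],\ldots,R_{\tau(k-2)}]]$ produced by Hall's identity is absorbed by Lemma 2.5 (landing in the symmetric commutator subgroup on the $k$ indices $\{1,\tau(1),\ldots,\tau(k-1)\}$) followed by the first induction, i.e.\ Theorem 1.2 for $k$ subgroups. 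You instead isolate a linearisation lemma --- $[L,M]$ lies in a product of $R_1$-headed left-normed chains whenever $L$ is $R_1$-headed --- proved by induction on the length of $M$ alone, and you deploy it only in the single hard case $\sigma(n)=1$, the case $\sigma(n)\neq 1$ being handled by your outer induction on $n$ applied to the inner bracket. Each route buys something: the paper's recycles lemmas already established for Theorem 1.1, so its proof is short on top of them, whereas your lemma is self-contained (needing only Hall's identity, $[A,B]=[B,A]$, and $[AB,C]\leq[A,C][B,C]$ for normal subgroups). More importantly, your lemma is sharper exactly where it matters: the paper's absorption step requires the first induction for $k$ subgroups, which is available only for $k<n$, so in the extreme case $\tau(n)=1$ (i.e.\ $k=n$) the paper's argument as written appeals to the very statement being proved; your induction on the length of $M$ never invokes the theorem for $n$ subgroups, so it closes precisely that case without circularity. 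Two points to make explicit in a final write-up: Hall's theorem gives containment in (not equality with) the product of the two rearranged brackets, and you should record that in every application of your lemma the index sets of $L$ and $M$ are disjoint and repetition-free, so the $R_1$-headed chains you produce are honestly of the form $[[R_{1},R_{\sigma(2)}],\ldots,R_{\sigma(n)}]$ with $\sigma\in\Sigma_{n-1}$.
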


Our next step is to give a generalization of~\cite[Theorem 1.7]{Wu1}. This will give more connections between homotopy groups and symmetric commutator subgroups.

Let $(X,A)$ be a pair of spaces. An  \textit{$n$-partition} of $X$ relative to $A$ means a sequence of subspaces $(A_1,\ldots,A_n)$ of $X$ such that
\begin{enumerate}
\item $A=A_i\cap A_j$ for each $1\leq i<j\leq n$ and
\item $X=\bigcup\limits_{i=1}^n A_i$.
\end{enumerate}
An $n$-partition $(A_1,\ldots,A_n)$ of $X$ relative to $A$ is called \textit{cofibrant} if the inclusion
$$
\bigcup_{i\in I}A_i \hookrightarrow \bigcup_{j\in J}A_j
$$
is a cofibration for any $I\subseteq J\subseteq\{1,2,\ldots,n\}$. Note that for a cofibrant partition, each union $A_I=\bigcup_{i\in I}A_i$ is the homotopy colimit of the diagram given by the inclusions $A_{I'}\hookrightarrow A_{I''}$ for $\emptyset\subseteq I'\subseteq I''\subsetneqq I$.

\begin{thm}\label{theorem1.3}
Let $(X,A)$ be a pair of spaces and let $(A_1,\ldots,A_n)$ be a cofibrant $n$-partition of $X$ relative to $A$ with $n\geq 2$. Suppose that
\begin{enumerate}
\item[i)] For any proper subset $I=\{i_1,\ldots,i_k\}\subsetneq \{1,2,\ldots,n\}$, the union
$
\bigcup_{i\in I}A_i
$
is a path-connected $K(\pi,1)$-space.
\item[ii)] The inclusion $A \to A_i$ induces an epimorphism of the corresponding fundamental groups for each $1\leq i\leq n$.
\end{enumerate}
Let $R_i$ be the kernel of $\pi_1(A)\to \pi_1(A_i)$ for $1\leq i\leq n$. Then
\begin{enumerate}
\item For any proper subset $I=\{i_1,\ldots,i_k\}\subsetneq \{1,2,\ldots,n\}$,
$$
R_{i_1}\cap\cdots\cap R_{i_k}=[[R_{i_1}, R_{i_2}],\ldots,R_{i_k}]_S.
$$
\item For any $1<k\leq n$ and any subset $I=\{i_1,\ldots,i_k\}\subseteq \{1,2,\ldots,n\}$, there is an isomorphism of groups
$$
\pi_k(X)\cong \left.\left(\bigcap_{s=1}^k\left(R_{i_s}\cdot \prod_{j\in J}R_j\right)\right)\right/\left([[R_{i_1}, R_{i_2}],\ldots,R_{i_k}]_S\cdot \prod_{j\in J}R_j \right),
$$
where $J= \{1,2,\ldots,n\}-I$.
In particular, $$\pi_n(X)\cong (R_1\cap R_2\cap \cdots\cap R_n)/[[R_1,R_2],\ldots,R_n]_S.$$
\end{enumerate}
\end{thm}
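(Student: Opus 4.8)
The plan is to deduce both parts from a single computation, which I will call the \emph{master formula}: for any cofibrant $m$-partition $(B_1,\dots,B_m)$ of a space $Y$ relative to a subspace $B$ (with $m\geq 2$) satisfying hypotheses i) and ii), writing $Q_i=\ker(\pi_1(B)\to\pi_1(B_i))$, one has $\pi_m(Y)\cong (Q_1\cap\cdots\cap Q_m)/[[Q_1,Q_2],\ldots,Q_m]_S$. Granting this, part (1) is the aspherical case: for a proper $I=\{i_1,\dots,i_k\}\subsetneq\{1,\dots,n\}$ with $k\geq 2$, apply the master formula to $Y=\bigcup_{i\in I}A_i$ with the induced partition $(A_i)_{i\in I}$ --- whose proper sub-unions are the $\bigcup_{i\in I'}A_i$ with $I'\subsetneq I$, still aspherical --- and use that $Y$ is a $K(\pi,1)$, so $\pi_k(Y)=0$; the case $k=1$ is the convention $[R_{i_1}]_S=R_{i_1}$. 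For part (2) with $J=\{1,\dots,n\}\setminus I$ nonempty, I would apply the master formula to the coarsened $k$-partition $(A_J\cup A_{i_1},\dots,A_J\cup A_{i_k})$ of $X$ relative to $A_J=\bigcup_{j\in J}A_j$. A direct check using $A_i\cap A_{i'}=A$ shows this is again a cofibrant $k$-partition satisfying i) and ii), that its pairwise intersections all equal $A_J$, and that its $s$-th kernel is the image $\overline{R_{i_s}}=(R_{i_s}\cdot\prod_{j\in J}R_j)/\prod_{j\in J}R_j$ of $R_{i_s}$ in $\pi_1(A_J)=\pi_1(A)/\prod_{j\in J}R_j$. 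Pulling the master formula back along $\pi_1(A)\to\pi_1(A_J)$ turns $\bigcap_s\overline{R_{i_s}}$ into $\bigcap_s(R_{i_s}\cdot\prod_{j\in J}R_j)$ and the symmetric commutator subgroup into $[[R_{i_1},R_{i_2}],\dots,R_{i_k}]_S\cdot\prod_{j\in J}R_j$, which is precisely the asserted quotient; the case $J=\emptyset$ is the ``in particular'' statement.

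It remains to prove the master formula. The key geometric input is that, since the partition is cofibrant and all higher intersections collapse to $B$, the space $Y$ is the homotopy colimit of the star diagram of the cover $\{B_i\}$; in particular $Y\simeq B'\cup_B B_m$ with $B'=\bigcup_{i<m}B_i$, a homotopy pushout of the $K(\pi,1)$'s $B'$, $B_m$ and $B$. The base case $m=2$ is the classical second-homotopy-group formula for a homotopy pushout of aspherical spaces along $\pi_1$-epimorphisms, $\pi_2(B_1\cup_B B_2)\cong(Q_1\cap Q_2)/[Q_1,Q_2]$, the nonabelian tensor-product computation of Brown--Loday (note $[Q_1,Q_2]_S=[Q_1,Q_2]$). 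For general $m$ I would feed the punctured $m$-cube of the cover, all of whose proper faces are aspherical, into the generalized van Kampen machinery for cubes of spaces: this computes $\pi_m(Y)$ from the Moore complex of the associated simplicial group and identifies it with $(Q_1\cap\cdots\cap Q_m)$ modulo the subgroup generated by the relevant generalized commutators, which is exactly the fat commutator subgroup $[[Q_1,\dots,Q_m]]$; asphericity of every proper sub-union is what forces the lower stages of the cube to contribute nothing, so that only this degree-$m$ term survives. A final application of Theorem~\ref{theorem1.1} rewrites $[[Q_1,\dots,Q_m]]$ as $[[Q_1,Q_2],\dots,Q_m]_S$, with Theorem~\ref{theorem1.2} used to organize the boundary terms.

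The main obstacle is this degree-$m$ identification: one must show that the only obstruction to an element of $Q_1\cap\cdots\cap Q_m$ representing the trivial class in $\pi_m(Y)$ is its lying in the fat commutator subgroup, i.e.\ that the Moore-complex boundary is exactly $[[Q_1,\dots,Q_m]]$ and nothing more. Unlike the $m=2$ case, $Y$ need not be $(m-1)$-connected after killing $\pi_1$ --- the model case $Y\simeq S^2$ has nonzero homotopy in every degree --- so the argument cannot simply read off a top homotopy group; it must genuinely exploit the aspherical cube structure to isolate the degree-$m$ contribution. I expect the bulk of the technical work to lie in controlling this commutator calculus: keeping track of conjugates and of the interaction between the nested commutators coming from the pushout $B'\cup_B B_m$ and the inductive description of $Q_1\cdots Q_{m-1}=\ker(\pi_1(B)\to\pi_1(B'))$. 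This is exactly where Theorems~\ref{theorem1.1} and~\ref{theorem1.2} become indispensable, converting the geometric boundary into the symmetric commutator subgroup and matching the group-theoretic bookkeeping on the nose.
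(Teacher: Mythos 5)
Your global strategy coincides with the paper's: the ``master formula'' is precisely assertion (3) of the strengthened theorem proved in Section~\ref{section3}, and your deductions of parts (1) and (2) from it --- sub-partitions of proper unions for part (1), and the coarsened partition $B_s=A_J\cup A_{i_s}$ relative to $A_J$ together with the pullback along $\phi\colon\pi_1(A)\to\pi_1(A)/\prod_{j\in J}R_j$ for part (2) --- are exactly the paper's arguments for its assertions (1) and (4). The gap is where you yourself flag it: the master formula for $m\geq 3$ is not proved, and the sketch you give for it would not go through as stated.

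Concretely, two things are missing. First, the usable explicit form of the ``generalized van Kampen machinery for cubes'' is the Ellis--Mikhailov theorem (Theorem~\ref{theorem3.1} in the paper), and it is not unconditional: it requires the group-theoretic connectivity hypothesis~(\ref{connectivity}), namely $\left(\bigcap_{i\in I}R_i\right)\cdot\prod_{j\in J}R_j=\bigcap_{i\in I}\left(R_i\cdot\prod_{j\in J}R_j\right)$ for each tuple $(R_1,\ldots,\hat R_i,\ldots,R_n)$. This condition does not follow formally from asphericity of the proper sub-unions; verifying it is assertion (2) of the paper's proof and consumes a full induction step (one applies the lower-level master formula to the coarsened partition inside the quotient group $G/\prod_{j\in J}R_j$ and pulls back with $\phi^{-1}$). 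Your sketch silently assumes the cube machinery ``sees'' asphericity and needs no such algebraic input; this is exactly the hypothesis the paper's introduction describes as ``difficult to check.'' Second, even granting that theorem, its output is not the fat commutator subgroup: the denominator is $\prod_{I\cup J=\{1,\ldots,n\},\,I\cap J=\emptyset}\left[\bigcap_{i\in I}R_i,\,\bigcap_{j\in J}R_j\right]$, built from \emph{intersections} of the $R_i$, not iterated commutators. Identifying this with $[[R_1,R_2],\ldots,R_n]_S$ is precisely where part (1) at level $n$ (intersections over proper subfamilies equal symmetric commutator subgroups) and Theorem~\ref{theorem1.1} enter. Consequently the three statements --- your part (1), the connectivity condition, and the master formula --- must be proved simultaneously by one interleaved induction on $n$; the master formula cannot be established first as a standalone black box, which is how your proposal is organized. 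Your observation that the hard point is showing the boundary is exactly $[[Q_1,\ldots,Q_m]]$ is accurate, but resolving it is not commutator bookkeeping layered on known machinery: it is the content of the Ellis--Mikhailov formula combined with the induction just described.
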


 For the special case where $n=2$, Theorem~\ref{theorem1.3} is the classical Brown-Loday Theorem~\cite[Corollary 3.4]{BL} with a generalization recently given in~\cite{EM}. But the connectivity hypothesis on the subgroups in~\cite{EM} seems difficult to check. Theorem~\ref{theorem1.3} emphasizes the point that, for any space $X$ that admits a cofibrant $K(\pi,1)$-partition, the higher homotopy groups of $X$ measure the difference between the intersection subgroups and the symmetric commutator subgroups for certain subgroups in the fundamental groups of the partition spaces. Moreover Theorem~\ref{theorem1.3} admits many applications. A direct consequence is to give an interesting connection between link groups and higher homotopy groups.

\begin{cor}\label{corollary1.4}
Let $M$ be a path-connected $3$-manifold and $L$ be a proper $m$-link in $M$ with $m\geq 2$. Suppose that for any nonempty sub-link $L'$ of $L$, the link complement $M\smallsetminus |L'|$ is a $K(\pi,1)$-space. Let $\Lambda_1,\Lambda_2,\ldots,\Lambda_n$ be any subsets of $\{1,2,\ldots,m\}$, with $n\geq 2$, such that
\begin{enumerate}
\item[(i)] $\Lambda_i\not=\emptyset$ for each $1\leq i\leq n$.
\item[(ii)] $\Lambda_i\cap \Lambda_j=\emptyset$ for $i\not=j$.
\item[(iii)] $\bigcup_{i=1}^n \Lambda_i=\{1,2,\ldots,m\}$.
\end{enumerate}
Let $\alpha_j$ be the $j\,$th meridian of the link $L$ and let
$$
R_i=\la\la \alpha_j \ | \ j\in \Lambda_i\ra\ra
$$
be the normal closure of $\alpha_j$ with $j\in \Lambda_i$ in $\pi_1(M\smallsetminus |L|)$. Then
\begin{enumerate}
\item For any proper subset $I=\{i_1,\ldots,i_k\}\subsetneq \{1,2,\ldots,n\}$,
$$
R_{i_1}\cap\cdots\cap R_{i_k}=[[R_{i_1}, R_{i_2}],\ldots,R_{i_k}]_S.
$$
\item There is an isomorphism of groups
$$
(R_1\cap R_2\cap\cdots\cap R_n)/[[R_1,R_2],\ldots,R_n]_S\cong \pi_n(M).
$$\hfill $\Box$
\end{enumerate}
\end{cor}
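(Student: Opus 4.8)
The plan is to deduce the corollary from Theorem~\ref{theorem1.3} by exhibiting suitable link complements as a cofibrant $K(\pi,1)$-partition of $M$. For a subset $S\subseteq\{1,2,\ldots,m\}$ write $L_S$ for the sublink of $L$ whose components are indexed by $S$, and set $J_i=\{1,2,\ldots,m\}\smallsetminus\Lambda_i$. I would take
$$
X=M,\qquad A=M\smallsetminus|L|,\qquad A_i=M\smallsetminus|L_{J_i}|\quad(1\le i\le n),
$$
so that $A_i$ is obtained from $A$ by gluing back precisely the components indexed by $\Lambda_i$. First I would check that $(A_1,\ldots,A_n)$ is an $n$-partition of $M$ relative to $A$: since the $\Lambda_i$ are pairwise disjoint, $J_i\cup J_j=\{1,\ldots,m\}$ for $i\ne j$, whence $A_i\cap A_j=M\smallsetminus|L_{J_i\cup J_j}|=M\smallsetminus|L|=A$; and since $\bigcup_i\Lambda_i=\{1,\ldots,m\}$ we get $\bigcap_iJ_i=\emptyset$, whence $\bigcup_iA_i=M\smallsetminus|L_{\cap_iJ_i}|=M$, matching $X=M$.

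To make the partition cofibrant in the sense required by Theorem~\ref{theorem1.3}, I would replace the open complements by the compact submanifold models obtained from a fixed system of pairwise disjoint tubular neighborhoods $N_1,\ldots,N_m$ of the components of $L$: putting $N_S=\bigcup_{j\in S}N_j$ and $\tilde A_I=M\smallsetminus\mathring N_{\cap_{i\in I}J_i}$, each $\tilde A_I$ is a codimension-$0$ compact submanifold with boundary, is homotopy equivalent to the corresponding open complement $A_I=\bigcup_{i\in I}A_i$, and the inclusions $\tilde A_{I'}\hookrightarrow\tilde A_{I''}$ for $I'\subseteq I''$ are cofibrations (they are NDR pairs). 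The disjointness computations above show $\tilde A_i\cap\tilde A_j=M\smallsetminus\mathring N_L$ for $i\ne j$ and $\bigcup_i\tilde A_i=M$, so this is an honest cofibrant $n$-partition homotopy equivalent to the one above.

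Next I would verify hypotheses (i) and (ii) of Theorem~\ref{theorem1.3}. For a proper subset $I\subsetneq\{1,\ldots,n\}$ we have $\bigcup_{i\in I}A_i=M\smallsetminus|L_{\cap_{i\in I}J_i}|$, and $\bigcap_{i\in I}J_i=\{1,\ldots,m\}\smallsetminus\bigcup_{i\in I}\Lambda_i$ is nonempty because $\bigcup_{i\in I}\Lambda_i$ is a proper subset; hence $L_{\cap_{i\in I}J_i}$ is a nonempty sublink $L'$ of $L$ and $\bigcup_{i\in I}A_i=M\smallsetminus|L'|$ is a $K(\pi,1)$ by hypothesis, and it is path-connected since removing a codimension-$2$ submanifold from the connected $3$-manifold $M$ leaves it connected. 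The key computation is the identification of the kernels: a van Kampen argument for gluing the solid tori $N_j$ ($j\in\Lambda_i$) back onto $A$ shows that $\pi_1(A_i)=\pi_1(A)/\la\la\alpha_j\mid j\in\Lambda_i\ra\ra$, because gluing a solid torus along its boundary kills exactly the meridian and introduces nothing new (the longitude already lies in $\pi_1(A)$). Thus $\ker(\pi_1(A)\to\pi_1(A_i))=R_i$, which simultaneously matches $R_i$ with the kernel demanded by Theorem~\ref{theorem1.3} and shows that $\pi_1(A)\to\pi_1(A_i)$ is surjective, giving (ii). Both conclusions of the corollary then follow verbatim from the two conclusions of Theorem~\ref{theorem1.3}, taking $I=\{1,\ldots,n\}$ and $J=\emptyset$ in the second so that $\pi_n(X)=\pi_n(M)$.

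The main obstacle I expect lies not in the homotopy-theoretic input, which is packaged entirely in Theorem~\ref{theorem1.3}, but in the two geometric verifications: first, arranging the tubular-neighborhood models so that the whole poset $\{\tilde A_I\}$ is simultaneously cofibrant while staying homotopy equivalent to the open complements; and second, the van Kampen computation of $\ker(\pi_1(A)\to\pi_1(A_i))$, where one must check that gluing back several components at once yields exactly the normal closure of the corresponding meridians and adds no further relations. The properness of $L$ is what supplies the tubular neighborhoods and the tameness these arguments require.
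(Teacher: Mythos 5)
Your proposal is correct and follows exactly the route the paper intends: Corollary~\ref{corollary1.4} is stated there as a direct consequence of Theorem~\ref{theorem1.3}, and your partition $A_i=M\smallsetminus|L_{J_i}|$, the tubular-neighborhood models for cofibrancy, and the van Kampen (Dehn-filling) identification $\ker(\pi_1(A)\to\pi_1(A_i))=R_i$ are precisely the verifications of its hypotheses that the paper leaves implicit (and handles by the same footnote-style modification in its surface and braid applications). Nothing essential is missing.
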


There are examples of links whose complements are $K(\pi,1)$-space
s. For instance, let $M=S^3$ and let $\pi\colon S^3\to S^2$ be the
Hopf fibration. Let $q_1,\ldots,q_m$ be $m$ distinct points in
$S^2$. Let $L=\pi^{-1}(\{q_1,q_2,\ldots,q_m\})$. Then $L$ is an
$m$-link in $S^3$ with the property that $S^3\smallsetminus |L'|$ is
a $K(\pi,1)$-space  for any non-empty sub-link $L'$ of $L$.

Consider the special case where $n=m$ with $\Lambda_i=\{i\}$ in Corollary~\ref{corollary1.4}. Then $R_i$ is the normal closure generated by the $i\,$th meridian in $\pi_1(M\smallsetminus |L|)$. Any element in the intersection subgroup $R_1\cap R_2\cap\cdots\cap R_n$ can be represented by a $1$-link $l$. The union $L\cup \{l\}$ gives an $(n+1)$-link in $M$ related to Brunnian links. Thus Corollary~\ref{corollary1.4} gives a connection between higher homotopy groups and Brunnian links.

For more applications of Theorem~\ref{theorem1.3}, we consider certain subgroups of surface groups and braid groups whose intersection subgroups modulo symmetric commutator subgroups are given by the homotopy groups. Our results on the braid groups are described as follows.

Let $M$ be a manifold. Recall that the $m\,$th ordered configuration space $F(M,m)$ is defined by
$$
F(M,m)=\{(z_1,\ldots,z_m)\in M^m\ | \ z_i\not=z_j\textrm{ for } i\not=j\}
$$
with subspace topology. Recall that the Artin braid group $B_n$ is generated by $\sigma_1,\ldots,\sigma_{n-1}$ with defining relations given by $\sigma_i\sigma_j=\sigma_j\sigma_i$ for $|i-j|\geq 2$ and $\sigma_i\sigma_{i+1}\sigma_i=\sigma_{i+1}\sigma_i\sigma_{i+1}$ for each $i$. The pure braid group $P_n$ is defined to be the kernel of the canonical quotient homomorphism from $B_n$ to the symmetric group $\Sigma_n$, with a set of generators given by
$$
A_{i,j}=\sigma_{j-1}\sigma_{j-2}\cdots \sigma_{i+1}\sigma_i^2\sigma_{i+1}^{-1}\cdots\sigma_{j-2}^{-1}\sigma_{j-1}^{-1}
$$
for $1\leq i<j\leq n$. Let
\begin{align*}
A_{0,j}  &  =(A_{j,j+1}A_{j,j+2}\cdots A_{j,n})^{-1}(A_{1,j}\cdots
A_{j-1,j})^{-1}\\
&  =(\sigma_{j}\sigma_{j+1}\cdots\sigma_{n-2}\sigma_{n-1}^{2}\sigma
_{n-2}\cdots\sigma_{j})^{-1}\cdot(\sigma_{j-1}\cdots\sigma_{2}\sigma_{1}
^{2}\sigma_{1}\cdots\sigma_{j-1})^{-1}.
\end{align*}
Then we have the following:
\begin{thm}\label{theorem1.5}
Let $\Lambda_1,\Lambda_2,\ldots,\Lambda_n$ be any subsets of $\{1,2,\ldots,m\}$, with $n\geq 2$, such that
\begin{enumerate}
\item[(i)] $\Lambda_i\not=\emptyset$ for each $1\leq i\leq n$.
\item[(ii)] $\Lambda_i\cap \Lambda_j=\emptyset$ for $i\not=j$.
\item[(iii)] $\bigcup_{i=1}^n \Lambda_i=\{1,2,\ldots,m\}$.
\end{enumerate}
Let $R_i=\la\la A_{0,j} \ | \ j\in \Lambda_i\ra\ra$ be the normal closure in the pure braid group $P_m$. Then
\begin{enumerate}
\item For any proper subset $I=\{i_1,\ldots,i_k\}\subsetneq \{1,2,\ldots,n\}$,
$$
R_{i_1}\cap\cdots\cap R_{i_k}=[[R_{i_1}, R_{i_2}],\ldots,R_{i_k}]_S.
$$
\item There is an isomorphism of groups
$$
(R_1\cap R_2\cap\cdots\cap R_n)/[[R_1,R_2],\ldots,R_n]_S\cong \pi_n(F(S^2,m)).
$$
\end{enumerate}
\end{thm}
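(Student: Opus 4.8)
The plan is to deduce Theorem~\ref{theorem1.5} from Theorem~\ref{theorem1.3} by choosing the right pair of spaces. I would take $A=F(\C,m)$, which is an Eilenberg--MacLane space $K(P_m,1)$, and realize the ambient space as $X=F(S^2,m)$ by regarding $\C=S^2\smallsetminus\{\infty\}$. For each $i$ I define the subspace
\begin{equation*}
A_i=\{(z_1,\ldots,z_m)\in F(S^2,m)\mid z_j\neq\infty\ \text{for every}\ j\notin\Lambda_i\},
\end{equation*}
that is, the configurations in which only the coordinates indexed by $\Lambda_i$ are permitted to visit the point $\infty$. Because the sets $\Lambda_i$ are pairwise disjoint and cover $\{1,\ldots,m\}$, a direct computation gives $A_i\cap A_j=F(\C,m)=A$ for $i\neq j$ (every coordinate is then forbidden from $\infty$) and $\bigcup_{i=1}^n A_i=F(S^2,m)=X$ (no coordinate is forbidden). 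Thus $(A_1,\ldots,A_n)$ is an $n$-partition of $X$ relative to $A$. To make it cofibrant I would thicken $\infty$ to a small open disk and require the constrained coordinates to stay in its closed complement, so that all the inclusions become inclusions of CW-pairs.

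Next I would verify the two hypotheses of Theorem~\ref{theorem1.3}. For hypothesis (i), let $I\subsetneq\{1,\ldots,n\}$ be proper and put $\Lambda_I=\bigcup_{i\in I}\Lambda_i$; then $\bigcup_{i\in I}A_i$ consists of the configurations in which the coordinates indexed by $\Lambda_I^c$ avoid $\infty$. Since $I$ is proper and the $\Lambda_i$ are nonempty and partition $\{1,\ldots,m\}$, the complement $\Lambda_I^c$ is nonempty, so at least one coordinate is pinned to the plane $\C$. Projecting onto such a plane-bound coordinate and building the Fadell--Neuwirth tower over the contractible base $\C$, every fibre is a sphere with at least one puncture, hence a $K(\pi,1)$; therefore $\bigcup_{i\in I}A_i$ is a path-connected $K(\pi,1)$. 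In particular each $A_i$ is a $K(\pi,1)$, because $n\geq 2$ forces $\Lambda_i^c\neq\emptyset$. For hypothesis (ii), the inclusion $A\hookrightarrow A_i$ merely fills in the configurations in which some $\Lambda_i$-coordinate equals $\infty$, a stratum of real codimension $2$, so $\pi_1(A)\to\pi_1(A_i)$ is onto.

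It then remains to identify $R_i=\ker(\pi_1(A)\to\pi_1(A_i))$ with $\la\la A_{0,j}\mid j\in\Lambda_i\ra\ra$. Geometrically, allowing coordinate $j$ to slide over $\infty$ kills exactly the loop in which the $j\,$th strand encircles the point at infinity once, and on the sphere this loop is represented by $A_{0,j}$; analyzing the relevant Fadell--Neuwirth fibration shows that the kernel is normally generated by these classes and by nothing more, giving $R_i=\la\la A_{0,j}\mid j\in\Lambda_i\ra\ra$. With all the hypotheses verified, conclusion (1) of Theorem~\ref{theorem1.3} yields conclusion (1) of Theorem~\ref{theorem1.5}, and conclusion (2) of Theorem~\ref{theorem1.3} taken with $I=\{1,\ldots,n\}$ gives
\begin{equation*}
(R_1\cap\cdots\cap R_n)/[[R_1,R_2],\ldots,R_n]_S\cong\pi_n(X)=\pi_n(F(S^2,m)).
\end{equation*}

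I expect the main obstacle to be the precise identification of the kernel $R_i$ with the normal closure of the $A_{0,j}$: one must match the algebraic definition of $A_{0,j}$ as a word in the standard generators $A_{p,q}$ with the geometric effect of filling in the point at infinity for the strands indexed by $\Lambda_i$, and confirm via the Fadell--Neuwirth fibration that no further relations are introduced. The verification of the $K(\pi,1)$ hypothesis (i) is the second delicate point, since it is exactly the presence of one plane-bound coordinate, guaranteed by the properness of $I$, that removes the spherical factor which would otherwise contribute a nontrivial $\pi_2$.
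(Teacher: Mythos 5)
Your proposal is correct and follows essentially the same route as the paper: the identical partition of $F(S^2,m)$ relative to $F(\R^2,m)$ by the subspaces where only the $\Lambda_i$-coordinates may reach $\infty$, the same Fadell--Neuwirth fibration argument for the $K(\pi,1)$ hypothesis on proper unions, and the same identification of $\Ker(\pi_1(A)\to\pi_1(A_i))$ with $\la\la A_{0,j}\mid j\in\Lambda_i\ra\ra$ before invoking Theorem~\ref{theorem1.3}. The kernel identification you flag as the main obstacle is exactly what the paper establishes in its Lemmas~\ref{lemma5.2} and~\ref{lemma5.3}, by comparing the fibration of $A_i$ over $F(\R^2,m-1)$ with that of $F(\R^2,m)$ (the fiber inclusion punctured plane $\hookrightarrow$ punctured sphere kills precisely the loop $A_{0,i}$ around $\infty$) and then assembling the generators via Van Kampen's theorem.
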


For instance, for $n=2$ with $m\geq 3$, $(R_1\cap R_2)/[R_1,R_2]=\pi_2(F(S^2,m))=0$, where $\pi_2(F(S^2,m))=0$ for $m\geq 3$ is given in~\cite[p. 244]{FB}. For $m\geq n\geq 3$, from~\cite[Theorem 1]{Fadell}, we have $\pi_n(F(S^2,m))\cong \pi_n(S^2)$ and so
$$
(R_1\cap R_2\cap\cdots\cap R_n)/[[R_1,R_2],\ldots,R_n]_S\cong \pi_n(F(S^2,m))\cong \pi_n(S^2).
$$
The braided descriptions of the homotopy groups $\pi_*(S^2)$ have
been studied in~\cite{BCWW,BMVW, CW1,CW2,LW, Wu2}.
Theorem~\ref{theorem1.5} is a new braided description of the
homotopy groups.

The article is organized as follows. The proofs of Theorems~\ref{theorem1.1} and~\ref{theorem1.2} are given in section~\ref{section2}. In section~\ref{section3}, we provide the proof of Theorem~\ref{theorem1.3}. In section~\ref{section4}, we discuss some applications to the free groups and surface groups. The applications to the braid groups and the proof of Theorem~\ref{theorem1.5} are given in section~\ref{section5}.

\section{The Proofs of Theorems~\ref{theorem1.1} and~\ref{theorem1.2}}\label{section2}
\subsection{Some Lemmas}
In this subsection, we give some useful lemmas on commutator subgroups. The following lemma is elementary.

\begin{lem}\label{lemma2.1}
Let $G$ be a group and let $A,B,C$ be normal subgroups of $G$. Then $[AB,C]=[A,C][B,C]$.\hfill $\Box$
\end{lem}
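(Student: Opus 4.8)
The plan is to prove the two inclusions $[A,C][B,C]\subseteq [AB,C]$ and $[AB,C]\subseteq [A,C][B,C]$ separately. Before doing so I would record that, since $A,B,C$ are normal in $G$, each of the commutator subgroups $[A,C]$, $[B,C]$ and $[AB,C]$ is again normal in $G$; in particular the set product $[A,C][B,C]$ is an honest (normal) subgroup, so it makes sense to compare it with $[AB,C]$. The inclusion $[A,C][B,C]\subseteq [AB,C]$ is then immediate: both $A$ and $B$ are contained in $AB$, hence $[A,C]\subseteq[AB,C]$ and $[B,C]\subseteq[AB,C]$, and since $[AB,C]$ is a subgroup it contains the product of these two.

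For the reverse inclusion I would argue on generators. The group $[AB,C]$ is generated by the commutators $[x,c]$ with $x\in AB$ and $c\in C$, and every such $x$ can be written as $x=ab$ with $a\in A$, $b\in B$ because $AB$ is a subgroup. Writing $y^{b}=b^{-1}yb$ for conjugation, the key tool is the standard commutator identity
$$
[ab,c]=[a,c]^{b}\,[b,c],
$$
which is verified by a direct expansion using $[u,v]=u^{-1}v^{-1}uv$. Now $[b,c]\in[B,C]$, while $[a,c]\in[A,C]$ and the normality of $[A,C]$ in $G$ forces the conjugate $[a,c]^{b}$ to lie in $[A,C]$ as well. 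Hence each generator $[ab,c]$ lies in $[A,C][B,C]$, and since the latter is a subgroup we conclude $[AB,C]\subseteq[A,C][B,C]$.

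Combining the two inclusions gives the asserted equality. There is no serious obstacle here, as the statement is elementary; the only points requiring care are exactly the two uses of normality. First, normality of the commutator subgroups guarantees that $[A,C][B,C]$ is a subgroup rather than a mere subset, so that both inclusions are between subgroups. Second, normality of $[A,C]$ is what allows the conjugated term $[a,c]^{b}$ to be reabsorbed into $[A,C]$ in the generator-level computation. Dropping normality would break this identity-based argument, so I would invoke it explicitly at both stages.
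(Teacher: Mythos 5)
Your proof is correct. The paper states this lemma without proof, declaring it elementary, and your argument --- the easy inclusion from $A,B\subseteq AB$, plus the identity $[ab,c]=[a,c]^{b}[b,c]$ together with normality of $[A,C]$ to absorb the conjugate --- is exactly the standard argument the authors leave to the reader, with the two uses of normality correctly identified.
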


The following classical theorem can be found in~\cite[Theorem 5.2, p.290]{MKS}.

\begin{thm}[Hall's Theorem]
Let $G$ be a group and let $A,B,C$ be normal subgroups of $G$. Then any one of the subgroups $[A,[B,C]]$, $[[A,B],C]$ and $[[A,C],B]$ is a subgroup of the product of the other two.
\hfill $\Box$
\end{thm}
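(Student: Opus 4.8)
The statement is the classical Three Subgroups Lemma, and the engine of the proof is the Hall--Witt identity: writing $g^h=h^{-1}gh$ for conjugation, one has, in \emph{any} group and for all $a,b,c$,
\begin{equation*}
[[a,b^{-1}],c]^{b}\cdot[[b,c^{-1}],a]^{c}\cdot[[c,a^{-1}],b]^{a}=1.
\end{equation*}
This is a formal identity, verified by expanding both sides in the free group on $a,b,c$; I would simply quote it. Its three factors are cyclically permuted under $a\mapsto b\mapsto c\mapsto a$, which is exactly the cyclic symmetry of the three subgroups in the statement, so it suffices to prove a single containment, say
\begin{equation*}
[[A,B],C]\subseteq [[B,C],A]\cdot[[C,A],B];
\end{equation*}
the remaining two then follow by relabelling, together with the elementary fact that $[X,Y]=[Y,X]$ as subgroups (so that $[A,[B,C]]=[[B,C],A]$ and $[[A,C],B]=[[C,A],B]$).

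My plan is to pass to a quotient. Since $A,B,C$ are normal in $G$, each of the iterated commutator subgroups $[[A,B],C]$, $[[B,C],A]$, $[[C,A],B]$ is normal in $G$, and hence so is their product. Set $N=[[B,C],A]\cdot[[C,A],B]$, a normal subgroup, and work in $\bar G=G/N$, writing $\bar g$ for images. In $\bar G$ the last two factors of the Hall--Witt identity become trivial, because $[[b,c^{-1}],a]\in[[B,C],A]\subseteq N$ and $[[c,a^{-1}],b]\in[[C,A],B]\subseteq N$, while normality of $N$ means the conjugations $(\,\cdot\,)^{\bar c}$ and $(\,\cdot\,)^{\bar a}$ do not matter. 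Thus for all $a\in A$, $b\in B$, $c\in C$ the identity forces $[[\bar a,\bar b^{-1}],\bar c]^{\bar b}=\bar 1$, whence $[[\bar a,\bar b^{-1}],\bar c]=\bar 1$. As $b$ ranges over the whole subgroup $B$ the element $b^{-1}$ ranges over $B$ as well, so in fact $[[\bar a,\bar b],\bar c]=\bar 1$ for all $a\in A$, $b\in B$, $c\in C$.

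It remains to upgrade this vanishing on generators to the vanishing of the whole subgroup $[[\bar A,\bar B],\bar C]$, which I expect to be the only point requiring care. The previous step says every generator $[\bar a,\bar b]$ of $[\bar A,\bar B]$ lies in the centraliser of each $\bar c\in\bar C$; since a centraliser is a subgroup, the entire subgroup $[\bar A,\bar B]$ centralises every $\bar c$, i.e.\ $[[\bar A,\bar B],\bar C]=\bar 1$. Translating back, the image of $[[A,B],C]$ in $G/N$ is trivial, that is $[[A,B],C]\subseteq N=[[B,C],A]\cdot[[C,A],B]$, as desired. The identification of the image of $[[A,B],C]$ with $[[\bar A,\bar B],\bar C]$ uses only that a quotient map carries a subgroup generated by given elements to the subgroup generated by their images, in the same spirit as Lemma~\ref{lemma2.1}. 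The genuinely nontrivial input is the Hall--Witt identity itself; everything after it is normality bookkeeping.
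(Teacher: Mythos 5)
Your proof is correct, but it is worth noting that the paper does not actually prove this statement at all: it quotes Hall's Theorem as a classical result from Magnus--Karrass--Solitar \cite[Theorem 5.2, p.290]{MKS} and marks it with $\Box$, using it later as a black box (in Lemmas~\ref{lemma2.3} and~\ref{lemma2.5} and in the proof of Theorem~\ref{theorem1.2}). What you have supplied is essentially the standard proof that lives inside that citation: the Hall--Witt identity $[[a,b^{-1}],c]^{b}[[b,c^{-1}],a]^{c}[[c,a^{-1}],b]^{a}=1$, followed by reduction modulo the normal subgroup $N=[[B,C],A]\cdot[[C,A],B]$ and a cyclic relabelling. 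Your handling of the two delicate points is sound: normality of $A,B,C$ makes all three iterated commutator subgroups (hence $N$) normal, so conjugation by $\bar b$, $\bar c$, $\bar a$ is harmless in $G/N$; and the upgrade from vanishing on generators $[\bar a,\bar b]$ to vanishing of $[[\bar A,\bar B],\bar C]$ is correctly done via centralisers being subgroups, which is exactly where a careless argument would gloss over something. So the net comparison is: the paper buys brevity by outsourcing the result to the literature, while your write-up makes the section self-contained at the cost of importing the Hall--Witt identity as the one unproved formal identity; either choice is legitimate, and your version would drop into the paper as a genuine proof of the displayed theorem.
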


Now let $R_1,\ldots,R_n$ be subgroups of $G$. Recall that the symmetric commutator subgroup $[[R_1,R_2],\ldots,R_n]_S$ is defined by
$$
[[R_1,R_2],\ldots,R_n]_S=\prod_{\sigma\in \Sigma_n}[[R_{\sigma(1)},R_{\sigma(2)}],\ldots,R_{\sigma(n)}].
$$
If each $R_i$ is a normal subgroup of $G$, then $[[R_{\sigma(1)},R_{\sigma(2)}],\ldots,R_{\sigma(n)}]$ is normal in $G$ and so is $[[R_1,R_2],\ldots,R_n]_S$.

\begin{lem}\label{lemma2.3}
Let $R_1,\ldots,R_n$ be normal subgroups of $G$. Let $g_j\in R_j$ for $1\leq j\leq n$. Then
$$
\beta^n(g_{\sigma(1)},\ldots,g_{\sigma(n)})\in [[R_1,R_2],\ldots,R_n]_S
$$
for any $\sigma\in \Sigma_n$ and any bracket arrangement $\beta^n$ of weight $n$.
\end{lem}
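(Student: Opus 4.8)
The plan is to argue by induction on the weight $n$. The cases $n=1$ and $n=2$ are immediate: $g_1\in R_1=[R_1]_S$, and $\beta^2(g_{\sigma(1)},g_{\sigma(2)})=[g_{\sigma(1)},g_{\sigma(2)}]$ lies in the left-iterated commutator $[R_{\sigma(1)},R_{\sigma(2)}]$, which is one of the factors of $[[R_1,R_2],\ldots,R_n]_S$. For $n\geq 3$, recall that a bracket arrangement of weight $n$ has the form $\beta^n(a_1,\ldots,a_n)=[\beta^k(a_1,\ldots,a_k),\beta^{n-k}(a_{k+1},\ldots,a_n)]$ for some $1\leq k<n$. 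Writing $I=\{\sigma(1),\ldots,\sigma(k)\}$ and $J=\{\sigma(k+1),\ldots,\sigma(n)\}$, set $u=\beta^k(g_{\sigma(1)},\ldots,g_{\sigma(k)})$ and $v=\beta^{n-k}(g_{\sigma(k+1)},\ldots,g_{\sigma(n)})$, so that $\beta^n(g_{\sigma(1)},\ldots,g_{\sigma(n)})=[u,v]$. Applying the inductive hypothesis separately to the subgroups indexed by $I$ and by $J$, I get that $u$ lies in the symmetric commutator subgroup $U$ of $\{R_i\mid i\in I\}$ and $v$ lies in the symmetric commutator subgroup $V$ of $\{R_j\mid j\in J\}$; as noted before the statement, both $U$ and $V$ are normal in $G$.

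Next I would unfold the commutator $[u,v]$. Since $U=\prod_{\rho}L_{\rho}$ and $V=\prod_{\mu}L_{\mu}$ are products of left-iterated commutator subgroups $L_{\rho}=[[R_{\rho(1)},R_{\rho(2)}],\ldots,R_{\rho(k)}]$ over orderings $\rho$ of $I$ and $L_{\mu}$ over orderings $\mu$ of $J$, Lemma~\ref{lemma2.1} together with its mirror identity $[A,BC]=[A,B][A,C]$ (which follows from Lemma~\ref{lemma2.1} and the equality $[A,B]=[B,A]$ of subgroups, all valid since the subgroups are normal) lets me expand $[U,V]=\prod_{\rho,\mu}[L_{\rho},L_{\mu}]$. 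Hence $[u,v]\in\prod_{\rho,\mu}[L_{\rho},L_{\mu}]$, and the whole statement reduces to the following claim: \emph{for any disjoint ordered index sets with $I\sqcup J=\{1,\ldots,n\}$ and any left-iterated commutator subgroups $L$ on $I$ and $L'$ on $J$, one has $[L,L']\subseteq[[R_1,R_2],\ldots,R_n]_S$.}

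The crux is to prove this claim, which I would do by a secondary induction on $|J|$ (nested inside the primary induction on $n$). When $|J|=1$, say $J=\{c\}$, the commutator $[L,R_c]$ is itself a single left-iterated commutator of weight $n$, obtained by appending $c$ as the final entry, hence one of the factors of $[[R_1,R_2],\ldots,R_n]_S$. When $|J|\geq 2$, write $L'=[M,R_c]$, where $c$ is the last index of $\mu$ and $M$ is the left-iterated commutator on $J\setminus\{c\}$. Hall's Theorem applied to the normal subgroups $L$, $M$, $R_c$ gives $[L,[M,R_c]]\subseteq[[L,M],R_c]\cdot[[L,R_c],M]$. For the first factor, $[L,M]$ is a commutator of left-iterated commutators over the $(n-1)$-element set $I\cup(J\setminus\{c\})$, so by the primary induction it already lies in the symmetric commutator subgroup on those indices; bracketing with $R_c$ and expanding by Lemma~\ref{lemma2.1} then writes $[[L,M],R_c]$ as a product of weight-$n$ left-iterated commutators. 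For the second factor, $[L,R_c]$ is a left-iterated commutator on $I\cup\{c\}$ and $M$ is one on $J\setminus\{c\}$, and since $|J\setminus\{c\}|=|J|-1$, the secondary inductive hypothesis applies directly. This closes both inductions.

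The routine parts are the commutator expansions of Lemma~\ref{lemma2.1} and the observation that appending one index to a left-iterated commutator again yields a left-iterated commutator. The main obstacle — and the reason Hall's Theorem is indispensable — is controlling the bracketing depth: a general bracket arrangement is deeply and arbitrarily nested, and the only mechanism to flatten an inner commutator $[M,R_c]$ into left-normed form is the Hall inclusion, which splits one term into two. The delicate point is arranging the nested induction so that each Hall term is strictly smaller in one of the two parameters — the first term dropping to the $(n-1)$-case and the second dropping $|J|$ at fixed $n$ — thereby guaranteeing termination.
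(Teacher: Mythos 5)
Your proposal is correct and follows essentially the same route as the paper's proof: a primary induction on $n$, reduction via Lemma~\ref{lemma2.1} to commutators of left-iterated commutator subgroups on complementary index sets, and a secondary induction on the size of the second block whose inductive step uses exactly the Hall splitting $[L,[M,R_c]]\leq [[L,M],R_c]\cdot[[L,R_c],M]$, with the first factor handled by the primary induction plus Lemma~\ref{lemma2.1} and the second by the secondary induction. The only difference is organizational — you isolate the statement about left-iterated commutator subgroups as a separate claim, while the paper runs the second induction on $q=n-p$ directly inside the bracket-arrangement decomposition — but the substance is identical.
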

\begin{proof}
The proof is given by double induction. The first induction is on $n$. Clearly the assertion holds for $n=1$. Suppose that the assertion holds for $m$ with $m<n$. Given an element $\beta^n(g_{\sigma(1)},\ldots,g_{\sigma(n)})$ as in the lemma. Then
$$
\beta^n(g_{\sigma(1)},\ldots,g_{\sigma(n)})=[\beta^p(g_{\sigma(1)},\ldots,g_{\sigma(p)}),\beta^{n-p}(g_{\sigma(p+1)},\ldots,g_{\sigma(n)})]
$$
for some bracket arrangements $\beta^p$ and $\beta^{n-p}$ with $1\leq p\leq n-1$. The second induction is on $q=n-p$. If $q=1$, we have
$$
\beta^{n-1}(g_{\sigma(1)},\ldots,g_{\sigma(n-1)})\in [[R_{\sigma(1)},R_{\sigma(2)}],\ldots,R_{\sigma(n-1)}]_S
$$
by the first induction and so
$$
\begin{array}{rcl}
\beta^n(g_{\sigma(1)},\ldots,g_{\sigma(n)})&=&[\beta^{n-1}(g_{\sigma(1)},\ldots,g_{\sigma(n-1)}),g_{\sigma(n)}]\\
&\in& [[[R_{\sigma(1)},R_{\sigma(2)}],\ldots,R_{\sigma(n-1)}]_S, R_{\sigma(n)}]\\
\end{array}
$$
with
$$
\begin{array}{rl}
& [[[R_{\sigma(1)},R_{\sigma(2)}],\ldots,R_{\sigma(n-1)}]_S, R_{\sigma(n)}]\\
=&\left[\prod_{\tau\in\Sigma_{n-1}}[[R_{\sigma(\tau(1))},R_{\sigma(\tau(2))}],\ldots,R_{\sigma(\tau(n-1))}],R_{\sigma(n)}\right]\\
\rEq^{\textrm{by Lemma~\ref{lemma2.1}}}&\prod_{\tau\in\Sigma_{n-1}}[[[R_{\sigma(\tau(1))},R_{\sigma(\tau(2))}],\ldots,R_{\sigma(\tau(n-1))}], R_{\sigma(n)}]\\
\leq& [[R_1,R_2],\ldots,R_n]_S.\\
\end{array}
$$
Now suppose that the assertion holds for $q'=n-p<q$. By the first induction, we have
$$
\beta^p(g_{\sigma(1)},\ldots,g_{\sigma(p)})\in [[R_{\sigma(1)},R_{\sigma(2)}],\ldots,R_{\sigma(p)}]_S
$$
and
$$
\beta^{n-p}(g_{\sigma(p+1)},\ldots,g_{\sigma(n)})\in [[R_{\sigma(p+1)},R_{\sigma(p+2)}],\ldots,R_{\sigma(n)}]_S.
$$
Thus
$$
\beta^{n}(g_{\sigma(1)},\ldots,g_{\sigma(n)})\in \left[[[R_{\sigma(1)},R_{\sigma(2)}],\ldots,R_{\sigma(p)}]_S, [[R_{\sigma(p+1)},R_{\sigma(p+2)}],\ldots,R_{\sigma(n)}]_S\right].
$$
By Lemma~\ref{lemma2.1}, $\beta^{n}(g_{\sigma(1)},\ldots,g_{\sigma(n)})$ lies in the product subgroup
$$
T=\prod\limits_{
\begin{array}{c}
\tau\in\Sigma_p\\
\rho\in\Sigma_{n-p}\\
\end{array}}
\left[[[R_{\sigma(\tau(1))},\ldots,R_{\sigma(\tau(p))}], [[R_{\sigma(\rho(p+1))},\ldots,R_{\sigma(\rho(n))}]\right],
$$
where $\Sigma_{n-p}$ acts on $\{p+1,\ldots,n\}$. By applying Hall's Theorem, we have
$$
\begin{array}{rl}
&\left[[[R_{\sigma(\tau(1))},\ldots,R_{\sigma(\tau(p))}], [[R_{\sigma(\rho(p+1))},\ldots,R_{\sigma(\rho(n))}]\right]\\
=&\left[[[R_{\sigma(\tau(1))},\ldots,R_{\sigma(\tau(p))}], \left[[[R_{\sigma(\rho(p+1))},\ldots, R_{\sigma(\rho(n-1))}],R_{\sigma(\rho(n))}\right]\right]\\
\leq &\left[\left[[[R_{\sigma(\tau(1))},\ldots,R_{\sigma(\tau(p))}], [[R_{\sigma(\rho(p+1))},\ldots, R_{\sigma(\rho(n-1))}]\right],R_{\sigma(\rho(n))}\right]\\
&\cdot\left[ \left[[[R_{\sigma(\tau(1))},\ldots,R_{\sigma(\tau(p))}],R_{\sigma(\rho(n))}\right], [[R_{\sigma(\rho(p+1))},\ldots, R_{\sigma(\rho(n-1))}]\right].\\
\end{array}
$$
Note that $A=\left[\left[[[R_{\sigma(\tau(1))},\ldots,R_{\sigma(\tau(p))}], [[R_{\sigma(\rho(p+1))},\ldots, R_{\sigma(\rho(n-1))}]\right],R_{\sigma(\rho(n))}\right]$ is generated by the elements of the form
$$
\left[\left[[[g'_{\sigma(\tau(1))},\ldots,g'_{\sigma(\tau(p))}], [[g'_{\sigma(\rho(p+1))},\ldots, g'_{\sigma(\rho(n-1))}]\right],g'_{\sigma(\rho(n))}\right]
$$
with $g'_j\in R_j$. By the second induction on case that $q=1$, the above elements lie in
$[[R_1,R_2],\ldots,R_n]_S$ and so
$$
A\leq [[R_1,R_2],\ldots,R_n]_S.
$$
Similarly, by the second induction hypothesis, $$\left[ \left[[[R_{\sigma(\tau(1))},\ldots,R_{\sigma(\tau(p))}],R_{\sigma(\rho(n))}\right], [[R_{\sigma(\rho(p+1))},\ldots, R_{\sigma(\rho(n-1))}]\right]$$ is a subgroup of $[[R_1,R_2],\ldots,R_n]_S$. It follows that
$$
T\leq [[R_1,R_2],\ldots,R_n]_S
$$
and so
$$
\beta^{n}(g_{\sigma(1)},\ldots,g_{\sigma(n)})\in [[R_1,R_2],\ldots,R_n]_S.
$$
Both the first and second inductions are finished, hence the result.
\end{proof}

\begin{lem}\label{lemma2.4}
Let $G$ be a group and let $R_1,\ldots,R_n$ be normal subgroups of $G$. Let $(i_1,i_2,\ldots,i_p)$ be a sequence of integers with $1\leq i_s\leq n$. Suppose that $$\{i_1,i_2,\ldots,i_p\}=\{1,2,\ldots,n\}.$$ Then
$$
[[R_{i_1},R_{i_2}],\ldots,R_{i_p}]\leq [[R_1,R_2],\ldots,R_n]_S.
$$
\end{lem}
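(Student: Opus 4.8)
The plan is to prove the statement by a double induction, the outer one on the number of distinct indices appearing and the inner one on the length $p$ of the sequence. Two preliminary observations drive the argument. First, as noted just before Lemma~\ref{lemma2.3}, since each $R_j$ is normal in $G$ the target subgroup $N:=[[R_1,R_2],\ldots,R_n]_S$ is normal in $G$. Second, whenever $A\leq N$ with $N$ normal in $G$ and $B$ is any subgroup of $G$, one has $[A,B]\leq N$: a generator $[a,b]=a^{-1}a^{b}$ with $a\in A\subseteq N$ lies in $N$ because $a^{b}\in N$. These two facts dispose of the case in which the terminal index is a repeated one almost for free.

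For the induction I would phrase the claim for an arbitrary nonempty index set $S$ in place of $\{1,\ldots,n\}$, the conclusion being containment in the symmetric commutator subgroup formed from the subfamily $\{R_j : j\in S\}$; by relabeling this is the same statement, but it is what lets the inductive hypotheses refer to smaller index sets. The outer induction is on $|S|$. When $|S|=1$ every entry equals the single index and the left iterated commutator is contained in that $R_j$, which is the symmetric commutator subgroup by convention. For the inner induction, the base case $p=|S|$ forces $(i_1,\ldots,i_p)$ to be a permutation of $S$, so the commutator is literally one of the factors $[[R_{\sigma(1)},R_{\sigma(2)}],\ldots,R_{\sigma(n)}]$ defining $N$ (this is also Lemma~\ref{lemma2.3} applied to the left-normed bracket arrangement), and containment is immediate.

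For the inner step $p>n$, assuming without loss of generality $S=\{1,\ldots,n\}$, I would split on the last entry $i_p$. If $i_p$ already occurs among $i_1,\ldots,i_{p-1}$, then the truncated sequence still exhausts $\{1,\ldots,n\}$ and has length $p-1<p$, so by the inner inductive hypothesis $[[R_{i_1},R_{i_2}],\ldots,R_{i_{p-1}}]\leq N$; writing $[[R_{i_1},R_{i_2}],\ldots,R_{i_p}]$ as the commutator of this block with $R_{i_p}$, the two preliminary observations give the containment in $N$ at once. If instead $i_p$ does not occur earlier, then $(i_1,\ldots,i_{p-1})$ exhausts the strictly smaller set $S'=\{1,\ldots,n\}\smallsetminus\{i_p\}$, so the outer inductive hypothesis places $[[R_{i_1},R_{i_2}],\ldots,R_{i_{p-1}}]$ inside the symmetric commutator subgroup $N'$ of the subfamily $\{R_j : j\neq i_p\}$. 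Expanding $N'=\prod_{\tau}[[R_{\tau(1)},R_{\tau(2)}],\ldots,R_{\tau(n-1)}]$ over permutations $\tau$ of $S'$ and using Lemma~\ref{lemma2.1} to distribute the outer bracket with $R_{i_p}$, each resulting factor $[[[R_{\tau(1)},R_{\tau(2)}],\ldots,R_{\tau(n-1)}],R_{i_p}]$ is a left iterated commutator over the $n$ distinct indices $\tau(1),\ldots,\tau(n-1),i_p$, hence one of the defining factors of $N$; therefore $[N',R_{i_p}]\leq N$ and the step is complete.

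The only real obstacle is organizing the bookkeeping so that the recursion terminates. A repeated terminal index must shorten the sequence while keeping full coverage, which feeds the inner induction, whereas a fresh terminal index cannot be dropped without losing an index and is instead handled by descending to a smaller index set, which feeds the outer induction. Separating these two reductions, length for the inner induction and index set for the outer, is what makes the double induction close; recognizing that the normality of $N$ trivializes the repeated-index case is the key simplification that prevents the circularity one would hit using length alone.
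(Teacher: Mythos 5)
Your proof is correct and follows essentially the same route as the paper's: a double induction (outer on the number of distinct indices, inner on the sequence length $p$), with the case split on whether the terminal index $i_p$ is repeated (handled by normality of the symmetric commutator subgroup) or fresh (handled by descending to the smaller index set and distributing the outer bracket via Lemma~\ref{lemma2.1}). The only cosmetic difference is that where you expand the smaller symmetric commutator subgroup directly into its defining factors, the paper instead cites Lemma~\ref{lemma2.3}, but the underlying computation is the same.
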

\begin{proof}The proof is given by double induction. The first induction is on  $n$.
 The assertion clearly holds for $n=1$. Suppose that the assertion holds for $n-1$ with $n>1$.
From the hypothesis that $\{i_1,i_2,\ldots,i_p\}=\{1,2,\ldots,n\}$, we have $p\geq n$. When $p=n$, $(i_1,\ldots,i_n)$ is a permutation of $(1,\ldots,n)$ and so $$[[R_{i_1},R_{i_2}],\ldots,R_{i_n}]\leq [[R_1,R_2],\ldots,R_n]_S.$$ Suppose that
$$
[[R_{j_1},R_{j_2}],\ldots,R_{j_q}]\leq [[R_1,R_2],\ldots,R_n]_S
$$
for any sequence $(j_1,\ldots,j_q)$ with $q<p$ and $\{j_1,\ldots,j_q\}=\{1,\ldots,n\}$. Let $(i_1,\ldots,i_p)$ be a sequence with $\{i_1,\ldots,i_p\}=\{1,\ldots,n\}$.

If $i_p\in \{i_1,\ldots,i_{p-1}\}$, then $\{i_1,\ldots,i_{p-1}\}=\{1,\ldots,n\}$ and so
$$
[[R_{i_1},R_{i_2}],\ldots,R_{i_{p-1}}]\leq [[R_1,R_2],\ldots,R_n]_S
$$
by the second induction hypothesis. It follows that
$$
[[R_{i_1},R_{i_2}],\ldots,R_{i_p}]\leq [[R_1,R_2],\ldots,R_n]_S.
$$

If $i_p\not\in \{i_1,\ldots,i_{p-1}\}$ , we may assume that $i_p=n$. Then $$\{i_1,\ldots,i_{p-1}\}=\{1,\ldots,n-1\}$$ and so
$$
[[R_{i_1},R_{i_2}],\ldots,R_{i_{p-1}}]\leq [[R_1,R_2],\ldots,R_{n-1}]_S
$$
by the first induction hypothesis. From Lemma~\ref{lemma2.3}, we have
$$
[[R_{i_1},R_{i_2}],\ldots,R_{i_p}]\leq [[R_1,R_2],\ldots,R_n]_S.
$$
The inductions are  finished, hence the result holds.
\end{proof}

\begin{lem}\label{lemma2.5}
Let $G$ be a group and let $R_1,\ldots,R_n$ be normal subgroups of $G$ with $n\geq 2$. Let $(i_1,\ldots,i_p)$ and $(j_1,\ldots,j_q)$ be sequences of integers such that $\{i_1,\ldots,i_p\}\cup\{j_1,\ldots,j_q\}=\{1,2,\ldots,n\}$. Then
$$
[[[R_{i_1},R_{i_2}],\ldots,R_{i_p}],[[R_{j_1},R_{j_2}],\ldots,R_{j_q}]]\leq [[R_1,R_2],\ldots,R_n]_S.
$$
\end{lem}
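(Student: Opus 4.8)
The plan is to induct on the length $q$ of the second sequence, working throughout at the level of subgroups. Write $A=[[R_{i_1},R_{i_2}],\ldots,R_{i_p}]$ and $B=[[R_{j_1},R_{j_2}],\ldots,R_{j_q}]$; both are normal in $G$ since an iterated commutator of normal subgroups is normal, so that $[A,B]$ is a normal subgroup and it suffices to bound it. Put $S_A=\{i_1,\ldots,i_p\}$ and $S_B=\{j_1,\ldots,j_q\}$, so $S_A\cup S_B=\{1,\ldots,n\}$. One feature I would exploit is that the assertion being proved is universally quantified over the group and its normal subgroups, so the inductive hypothesis on $q$ may be applied not only to the family $R_1,\ldots,R_n$ but to any subfamily $\{R_k\}_{k\in T}$ with $T\subseteq\{1,\ldots,n\}$, the relevant index set then being $T$.

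For the base case $q=1$ we have $B=R_{j_1}$, so $[A,R_{j_1}]=[[R_{i_1},R_{i_2}],\ldots,R_{i_p},R_{j_1}]$ is a single left iterated commutator subgroup whose index set is $S_A\cup\{j_1\}=\{1,\ldots,n\}$; Lemma~\ref{lemma2.4} immediately gives $[A,R_{j_1}]\leq[[R_1,R_2],\ldots,R_n]_S$.

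For $q>1$ I would set $N'=[[R_{j_1},R_{j_2}],\ldots,R_{j_{q-1}}]$, so that $B=[N',R_{j_q}]$, and apply Hall's Theorem to the normal subgroups $A$, $N'$, $R_{j_q}$:
\[
[A,B]=[A,[N',R_{j_q}]]\leq [[A,N'],R_{j_q}]\cdot[[A,R_{j_q}],N'].
\]
The second factor is straightforward: $A'=[A,R_{j_q}]$ is a left iterated commutator subgroup of length $p+1$ with index set $S_A\cup\{j_q\}$, the subgroup $N'$ has length $q-1$, and $(S_A\cup\{j_q\})\cup S_{N'}=\{1,\ldots,n\}$, so the inductive hypothesis yields $[[A,R_{j_q}],N']\leq[[R_1,R_2],\ldots,R_n]_S$.

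The first factor $[[A,N'],R_{j_q}]$ is where the real work lies, since $[A,N']$ is a commutator of two left iterated subgroups and need not itself be left iterated. I would handle it by setting $T=S_A\cup S_{N'}=\{k_1,\ldots,k_m\}$ and applying the inductive hypothesis to the subfamily $\{R_k\}_{k\in T}$ (legitimate because $A$ and $N'$ are left iterated commutator subgroups with $S_A\cup S_{N'}=T$ and $N'$ has length $q-1$); this gives $[A,N']\leq[[R_{k_1},R_{k_2}],\ldots,R_{k_m}]_S$, a product of left iterated commutator subgroups each with index set exactly $T$. By Lemma~\ref{lemma2.1} the commutator of this product with $R_{j_q}$ distributes over its factors, and each resulting factor is a left iterated commutator subgroup with index set $T\cup\{j_q\}=\{1,\ldots,n\}$; Lemma~\ref{lemma2.4} then puts every such factor into $[[R_1,R_2],\ldots,R_n]_S$. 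Combining the two factors completes the induction. The one delicate point, which I expect to be the main obstacle, is exactly this bookkeeping in the first factor: one must first descend to the smaller index set $T$ to rewrite $[A,N']$ as a product of left iterated pieces, and only afterwards attach $R_{j_q}$ via Lemma~\ref{lemma2.1} so as to recover weight and index data spanning all of $\{1,\ldots,n\}$.
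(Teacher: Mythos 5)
Your proof is correct and follows essentially the same route as the paper's: the identical Hall's Theorem decomposition $[A,[N',R_{j_q}]]\leq [[A,N'],R_{j_q}]\cdot[[A,R_{j_q}],N']$, with the second factor absorbed by the induction on $q$ and the first factor handled by rewriting $[A,N']$ as a product of left iterated commutators and then attaching $R_{j_q}$ via Lemmas~\ref{lemma2.1} and~\ref{lemma2.4}. The only difference is bookkeeping: where you run a single induction on $q$ with the hypothesis quantified over all families and descend uniformly to $T=S_A\cup S_{N'}$, the paper runs a double induction on $n$ and $q$, splitting the first factor into the case $T=\{1,\ldots,n\}$ (second induction plus normality of the symmetric commutator subgroup) and the case $T\subsetneq\{1,\ldots,n\}$ (outer induction on $n$); your appeal to subfamilies and the paper's induction on $n$ are the same idea in different dress, and your unified treatment is marginally cleaner. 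One small gloss: when $T$ is a singleton (all of $i_1,\ldots,i_p,j_1,\ldots,j_{q-1}$ equal a single index $k$, which forces $n=2$), the inductive hypothesis for the subfamily is not literally licensed by the statement, which requires at least two subgroups; but there $[A,N']\leq R_k=[R_k]_S$ trivially, so the argument is unharmed.
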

\begin{proof}
The proof is given by double induction on $n$ and $q$ with $n\geq 2$ and $q\geq 1$. First we prove the assertion holds for $n=2$. If  $\{i_1,\ldots,i_p\}=\{1,2\}$
or $\{j_1,\ldots,j_q\}=\{1,2\}$, we have
$$
[[R_{i_1},R_{i_2}],\ldots,R_{i_p}]\leq [[R_1,R_2]_S \textrm{ or }[[R_{j_1},R_{j_2}],\ldots,R_{j_q}]\leq [[R_1,R_2]_S$$
by Lemma~\ref{lemma2.4}  and so  $$
[[[R_{i_1},R_{i_2}],\ldots,R_{i_p}],[[R_{j_1},R_{j_2}],\ldots,R_{j_q}]]\leq [[R_1,R_2]_S.
$$
Otherwise, $i_1=\cdots=i_p$ and $j_1=\cdots=j_q $, since  $\{i_1,\ldots,i_p\}\cup\{j_1,\ldots,j_q\}=\{1,2\}$, we may assume that  $i_1=\cdots=i_p=1, j_1=\cdots=j_q =2$, then $$[[R_{i_1},R_{i_2}],\ldots,R_{i_p}]\leq R_1 \textrm{ and }[[R_{j_1},R_{j_2}],\ldots,R_{j_q}]\leq R_2$$ and so
 $$
[[[R_{i_1},R_{i_2}],\ldots,R_{i_p}],[[R_{j_1},R_{j_2}],\ldots,R_{j_q}]]\leq [[R_1,R_2]_S.
$$

Suppose the assertion holds for $n-1$, that is $$
[[[R_{i_1},R_{i_2}],\ldots,R_{i_p}],[[R_{j_1},R_{j_2}],\ldots,R_{j_q}]]\leq [[R_1,R_2],\ldots,R_{n-1}]_S.
$$ when $\{i_1,\ldots,i_p\}\cup\{j_1,\ldots,j_q\}=\{1,2,\ldots,n-1\}$. We will use the second induction on $q$ to prove the assertion holds for $n$.

When $q=1$, the assertion follows by Lemma~\ref{lemma2.4}. Suppose that the assertion holds for $q-1$. By the Hall Theorem, $[[[R_{i_1},R_{i_2}],\ldots,R_{i_p}],[[R_{j_1},R_{j_2}],\ldots,R_{j_q}]]$ is a subgroup of the product
$$
[[[[R_{i_1},\ldots,R_{i_p}],[[R_{j_1},\ldots,R_{j_{q-1}}]],R_{j_q}]\cdot [[[[R_{i_1},\ldots,R_{i_p}],R_{j_q}],[[R_{j_1},\ldots,R_{j_{q-1}}]].
$$

By the second induction we have $$ [[[[R_{i_1},\ldots,R_{i_p}],R_{j_q}],[[R_{j_1},\ldots,R_{j_{q-1}}]]\leq [[R_1,R_2],\ldots,R_n]_S.$$

If $\{i_1,\ldots,i_p\}\cup\{j_1,\ldots,j_{q-1}\}=\{1,2,\ldots,n\}$, by the second induction $$[[[R_{i_1},\ldots,R_{i_p}],[[R_{j_1},\ldots,R_{j_{q-1}}]]\leq [[R_1,R_2],\ldots,R_n]_S$$ and hence $$[[[[R_{i_1},\ldots,R_{i_p}],[[R_{j_1},\ldots,R_{j_{q-1}}]],R_{j_q}]\leq [[R_1,R_2],\ldots,R_n]_S. $$

If $\{i_1,\ldots,i_p\}\cup\{j_1,\ldots,j_{q-1}\}\neq \{1,2,\ldots,n\}$, we may assume that  $$\{i_1,\ldots,i_p\}\cup\{j_1,\ldots,j_{q-1}\}=\{1,2,\ldots,n-1\}$$ and $j_q=n$. By the first induction, $$[[[R_{i_1},\ldots,R_{i_p}],[[R_{j_1},\ldots,R_{j_{q-1}}]]\leq [[R_1,R_2],\ldots,R_{n-1}]_S.$$
Then $$[[[[R_{i_1},\ldots,R_{i_p}],[[R_{j_1},\ldots,R_{j_{q-1}}]],R_{j_q}]\leq [[R_1,R_2],\ldots,R_n]_S.$$

It follows that $[[[R_{i_1},R_{i_2}],\ldots,R_{i_p}],[[R_{j_1},R_{j_2}],\ldots,R_{j_q}]]\leq [[R_1,R_2],\ldots,R_n]_S$. The double induction is finished, hence the result.
\end{proof}

\subsection{Proof of Theorem~\ref{theorem1.1}}
Clearly $[[R_1,R_2],\ldots,R_n]_S\leq [[R_1,R_2,\ldots, R_n]]$. We prove by induction on $n$ that $$[[R_1,R_2,\ldots,R_n]]\leq [[R_1,R_2],\ldots,R_n]_S.$$ The assertion holds for $n=1$.
\begin{hypothesis}\label{induction1}
Suppose that
$$
[[R_1,R_2,\ldots, R_s]]\leq [[R_1,R_2],\ldots, R_s]_S
$$
for any normal subgroups $R_1,\ldots,R_s$ of $G$ with $1\leq s<n$.
\end{hypothesis}

Let $R_1,\ldots, R_n$ be any normal subgroups of $G$. By definition, $[[R_1,R_2,\ldots,R_n]]$ is generated by all commutators $$\beta^t(g_{i_1},\ldots,g_{i_t})$$ of weight $t$ such that
$\{i_1,i_2,\ldots,i_t\}=\{1,2,\ldots,n\}$ with $g_j\in R_j$. To prove that each generator $\beta^t(g_{i_1},\ldots,g_{i_t})\in [[R_1,R_2],\ldots,R_n]_S$, we start the second induction on the weight $t$ of $\beta^t$ with $t\geq n$. If $t=n$, then $(i_1,\ldots,i_n)$ is a permutation of $(1,\ldots,n)$ and so the assertion holds by Lemma~\ref{lemma2.3}. Now assume that the following hypothesis holds:

\begin{hypothesis}\label{induction2}
Let $n\leq k<t$ and let
$$
\beta^k(g'_{i_1},\ldots,g'_{i_k})
$$
be any bracket arrangement of weight $k$ such that
where
\begin{enumerate}
\item[1)] $1\leq i_s\leq n$;
\item[2)] $\{i_1,\ldots,i_k\}=\{1,\ldots,n\}$;
\item[3)] $g'_j\in R_j$;
\end{enumerate}
Then $\beta^k(g'_{i_1},\ldots,g'_{i_k})\in [[R_1,R_2],\ldots,R_n]_S$.
\end{hypothesis}

Let $\beta^t(g_{i_1},\ldots,g_{i_t})$ be any bracket arrangement of weight $t$ with $\{i_1,\ldots,i_t\}=\{1,\ldots,n\}$ and $g_j\in R_j$ for $1\leq j\leq n$. From the definition of bracket arrangement, we have
$$
\beta^t(g_{i_1},\ldots,g_{i_t})=[\beta^p(g_{i_1},\ldots,g_{i_p}), \beta^{t-p}(g_{i_{p+1}},\ldots,g_{i_t})]
$$
for some bracket arrangements $\beta^p$ and $\beta^{t-p}$ of weight $p$ and $t-p$, respectively, with $1\leq p\leq n-1$. Let
$$
A=\{i_1,\ldots,i_p\}\textrm{ and } B=\{i_{p+1},\ldots,i_t\}.
$$
Then both $A$ and $B$ are the subsets of $\{1,\ldots,n\}$ with $A\cup B=\{1,\ldots,n\}$.

Suppose that the cardinality $|A|=n$ or $|B|=n$. We may assume that $|A|=n$. By Hypothesis~\ref{induction2},
$$
\beta^p(g_{i_1},\ldots,g_{i_p})\in [[R_1,R_2],\ldots,R_n]_S.
$$
Since $[[R_1,R_2],\ldots,R_n]_S$ is a normal subgroup of $G$, we have
$$
\beta^t(g_{i_1},\ldots,g_{i_t})=[\beta^p(g_{i_1},\ldots,g_{i_p}), \beta^{t-p}(g_{i_{p+1}},\ldots,g_{i_t})]\in [[R_1,R_2],\ldots,R_n]_S.
$$
This proves the result in this case.

Suppose that $|A|<n$ and $|B|<n$. Let $A=\{l_1,\ldots,l_a\}$ with $1\leq l_1<l_2<\cdots<l_a\leq n$ and $1\leq a<n$, and let $B=\{k_1,\ldots,k_b\}$ with $1\leq k_1<k_2<\cdots< k_b$ and $1\leq b<n$. Observe that
$$
\beta^p(g_{i_1},\ldots,g_{i_p})\in [[R_{l_1},R_{l_2},\ldots,R_{l_a}]].
$$
By Hypothesis~\ref{induction1},
$$
[[R_{l_1},R_{l_2},\ldots,R_{l_a}]]=[[R_{l_1},R_{l_2}],\ldots,R_{l_a}]_S.
$$
Thus
$$
\beta^p(g_{i_1},\ldots,g_{i_p})\in [[R_{l_1},R_{l_2}],\ldots,R_{l_a}]_S.
$$
Similarly
$$
\beta^{t-p}(g_{i_{p+1}},\ldots,g_{i_t})\in [[R_{k_1},R_{k_2}],\ldots,R_{k_b}]_S.
$$
It follows that the element
$\beta^t(g_{i_1},\ldots,g_{i_t})$ lies in the commutator subgroup
$$
\left[[[R_{l_1},R_{l_2}],\ldots,R_{l_a}]_S, [[R_{k_1},R_{k_2}],\ldots,R_{k_b}]_S\right].
$$
From Lemma~\ref{lemma2.5}, we have
$$
\left[[[R_{l_{\sigma(1)}},R_{l_{\sigma(2)}}],\ldots,R_{l_{\sigma(a)}}], [[R_{k_{\tau(1)}},R_{k_{\tau(2)}}],\ldots,R_{k_{\tau(b)}}]\right]\leq [[R_1,R_2],\ldots,R_n]_S
$$
for all $\sigma\in\Sigma_a$ and $\tau\in\Sigma_b$ because $\{l_1,\ldots,l_a\}\cup \{k_1,\ldots,k_b\}=A\cup B=\{1,2,\ldots,n\}$. It follows from Lemma~\ref{lemma2.1} that
$$
\left[[[R_{l_1},R_{l_2}],\ldots,R_{l_a}]_S, [[R_{k_1},R_{k_2}],\ldots,R_{k_b}]_S\right]\leq [[R_1,R_2],\ldots,R_n]_S.
$$
Thus
$$
\beta^t(g_{i_1},\ldots,g_{i_t})\in [[R_1,R_2],\ldots,R_n]_S.
$$
The inductions are finished, hence Theorem~\ref{theorem1.1}.

\subsection{Proof of Theorem~\ref{theorem1.2}}
Clearly $$\prod_{\sigma\in \Sigma_{n-1}}[[R_{1},R_{\sigma(2)}],\ldots,R_{\sigma(n)}]\leq [[R_1,R_2],\ldots,R_n]_S.$$
We prove by induction on $n$ that $$[[R_1,R_2],\ldots,R_n]_S\leq \prod_{\sigma\in \Sigma_{n-1}}[[R_{1},R_{\sigma(2)}],\ldots,R_{\sigma(n)}].$$ The assertion holds for $n=1$.

Suppose that
$$
[[R_1,R_2],\ldots, R_s]_S\leq \prod_{\sigma\in \Sigma_{s-1}}[[R_{1},R_{\sigma(2)}],\ldots,R_{\sigma(s)}]
$$
for any normal subgroups $R_1,\ldots,R_s$ of $G$ with $1\leq s<n$.

Let $R_1,\ldots, R_n$ be any normal subgroups of $G$. By definition, $$[[R_1,R_2],\ldots,R_n]_S=\prod\limits_{\tau\in \Sigma_{n}}[[R_{\tau(1)},R_{\tau(2)}],\ldots,R_{\tau(n)}].$$ It suffices to prove that for any $\tau\in \Sigma_n$, $$[[R_{\tau(1)},R_{\tau(2)}],\ldots,R_{\tau(n)}]\leq \prod_{\sigma\in \Sigma_{n-1}}[[R_{1},R_{\sigma(2)}],\ldots,R_{\sigma(n)}].$$
The assertion holds for  $\tau(1)=1$.
Suppose that the assertion holds for $\tau(k-1)=1$ with $1\leq k-1<n$.
When $\tau(k)=1$, consider the subgroup $$[[[[[R_{\tau(1)},\ldots,R_{\tau(k-2)}],R_{\tau(k-1)}],R_1],R_{\tau(k+1)},\ldots,R_{\tau(n)}].$$
Following from the Hall Theorem,
$$
\begin{array}{rl}
&[[[[[R_{\tau(1)},\ldots, R_{\tau(k-2)}],R_{\tau(k-1)}],R_1],R_{\tau(k+1)},\ldots,R_{\tau(n)}]\\
&\\
\leq &[[[[[R_{\tau(1)},\ldots,R_{\tau(k-2)}],R_1],R_{\tau(k-1)}], R_{\tau(k+1)},\ldots,R_{\tau(n)}]]\\
&\cdot[   [[R_{\tau{(k-1)}},R_1], [[R_{\tau(1)},\ldots, R_{\tau(k-2)}]],   R_{\tau(k+1)},\ldots,R_{\tau(n)}                                                 ].\\
\end{array}
$$
By the second induction $$
\begin{array}{rl}
&[[[[[R_{\tau(1)},\ldots,R_{\tau(k-2)}],R_1],R_{\tau(k-1)}], R_{\tau(k+1)},\ldots,R_{\tau(n)}]]\\
\leq &\prod\limits_{\sigma\in \Sigma_{n-1}}[[R_{1},R_{\sigma(2)}],\ldots,R_{\sigma(n)}].\\
\end{array}
$$
From Lemma~\ref{lemma2.5} and the first induction, we have
$$
\begin{array}{rcl}
[[R_{\tau{(k-1)}},R_1], [[R_{\tau(1)},\ldots, R_{\tau(k-2)}]]&\leq& [[R_1,R_{l_1}],R_{l_2},\ldots,R_{l_{k-1}}]_S\\
&\leq& \prod\limits_{\sigma\in \Sigma_{k-1}}[[R_{1},R_{\sigma(l_1)}],\ldots,R_{\sigma(l_{k-1})}],\\
\end{array} $$
where
$\{1,\tau(1),\ldots,\tau{(k-1)}\}=\{1,l_1,l_2,\ldots,l_{k-1}\}$ with $1<l_1<l_2<\cdots<l_{k-1}$.
By Lemma~\ref{lemma2.1}
$$
\begin{array}{rl}
&[  [[R_{\tau{(k-1)}},R_1], [[R_{\tau(1)},\ldots, R_{\tau(k-2)}]],   R_{\tau(k+1)},\ldots,R_{\tau(n)}                                                 ]\\
&\\
\leq &\prod\limits_{\sigma\in \Sigma_{k-1}}[ [[R_{1},R_{\sigma(l_1)}],\ldots,R_{\sigma(l_{k-1})}],R_{\tau(k+1)},\ldots,R_{\tau(n)}                                                 ]\\
&\\
\leq &\prod\limits_{\sigma\in \Sigma_{n-1}}[[R_{1},R_{\sigma(2)}],\ldots,R_{\sigma(n)}]\\
\end{array}
$$
because for each $\sigma\in \Sigma_{k-1}$, the sequence $(\sigma(l_1),\ldots,\sigma(l_{k-1}),\tau(k+1),\ldots,\tau(n))$ is a permutation of $(\tau(1),\ldots,\tau(k-1),\tau(k+1),\ldots,\tau(n))$ which is a permutation of $(2,\ldots,n)$.
 It follows that
 $$
 \begin{array}{rl}
 &[[[[[R_{\tau(1)},\ldots,R_{\tau(k-2)}],R_{\tau(k-1)}],R_1],R_{\tau(k+1)},\ldots,R_{\tau(n)}]\\
&\\
 \leq &\prod\limits_{\sigma\in \Sigma_{n-1}}[[R_{1},R_{\sigma(2)}],\ldots,R_{\sigma(n)}]\\
 \end{array}
 $$
and so $$[[R_1,R_2],\ldots,R_n]_S\leq \prod_{\sigma\in \Sigma_{n-1}}[[R_{1},R_{\sigma(2)}],\ldots,R_{\sigma(n)}].$$ This finishes the proof.

\section{Proof of Theorem~\ref{theorem1.3}}\label{section3}
\subsection{Ellis-Mikhailov Theorem.}
In this subsection, we review some terminology and the main result in~\cite{EM}. Let $G$ be a group. An $m$-tuple of normal subgroups $(R_1,\ldots,R_m)$ of $G$ is called \textit{connected} if either
\begin{enumerate}
\item  $m\leq 2$ or
\item $m\geq 3$ with the property that: for all subsets $I, J \subseteq \{1,\cdots,m\}$ with $ |I|\geq 2,
|J|\geq 1$
\begin{equation}\label{connectivity}
\left(\bigcap_{i\in I} R_i \right)\cdot  \prod_{j\in J}R_j =
\bigcap_{i\in I} \left( R_i\cdot \prod_{j\in J}R_j \right).
\end{equation}
\end{enumerate}

Let  $G$  be a group with normal subgroups $R_1,\ldots,R_n$. Let $X(G;R_1,\ldots,R_n)$ be the homotopy colimit of the cubical diagram obtained from  classifying spaces $B(G/\prod_{i\in I} R_i)$ with the maps
$$
B(G/\prod_{i\in I} R_i)\to B(G/\prod_{i'\in I'}R_{i'})
$$
induced by the canonical quotient homomorphism $G/\prod_{i\in I} R_i \twoheadrightarrow G/\prod_{i'\in I'}R_{i'}$ for $I\subseteq I'$, where $I$ ranges over all proper subsets $I \subsetneq \{1,\ldots,n\}$.

In the following theorem and the rest of the article, the notation $(\ \cdots \ \hat a \ \cdots)$ means to remove letter $a$.

\begin{thm}~\cite[Theorem 1]{EM}\label{theorem3.1}
Let  $G$  be a group with normal subgroups $R_1,\ldots,R_n$ with $n\geq2$. Let $X=X(G;R_1,\ldots,R_n)$.
Suppose that the $(n-1)$-tuple $$(R_1,\ldots,\hat R_i,\ldots,R_n)$$ is connected for each $1\leq i\leq n$. Then
$$
\pi_n(X) \cong \frac{R_1\cap \cdots \cap R_n}{\prod_{I\cup J=\{1,\ldots,n\},
 {I\cap J=\emptyset}}
[\cap_{i\in I}R_i,\cap_{j\in J}R_j]}.
$$\hfill $\Box$
\end{thm}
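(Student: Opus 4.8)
The plan is to recognize the cubical diagram $\{B(G/\prod_{i\in I}R_i)\}_{I\subseteq\{1,\ldots,n\}}$ as an $n$-cube of spaces and to compute $\pi_n$ of the homotopy colimit of its punctured part through the algebraic theory of crossed $n$-cubes of groups (equivalently $\mathrm{cat}^n$-groups), as developed by Brown--Loday and Ellis--Steiner. First I would complete the punctured cube by its terminal vertex $B(G/\prod_{i=1}^n R_i)$ and note that every two-dimensional face
\[
\begin{CD}
B\bigl(G/\prod_{i\in I}R_i\bigr) @>>> B\bigl(G/\prod_{i\in I\cup\{s\}}R_i\bigr)\\
@VVV @VVV\\
B\bigl(G/\prod_{i\in I\cup\{t\}}R_i\bigr) @>>> B\bigl(G/\prod_{i\in I\cup\{s,t\}}R_i\bigr)
\end{CD}
\]
is determined up to homotopy by the quotient homomorphisms, so that $X$ is the total cofibre object of the cube whose low-dimensional homotopy is accessible by van Kampen-type methods. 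The goal is to read $\pi_n(X)$ off the associated fundamental crossed $n$-cube.

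The key step is to identify that crossed $n$-cube explicitly. I expect it to be the crossed $n$-cube whose value on a subset $S\subseteq\{1,\ldots,n\}$ is the intersection $\bigcap_{i\in S}R_i$ (with $G$ at $S=\emptyset$), whose structure maps are the inclusions, and whose $h$-pairings $h\colon\bigl(\bigcap_{i\in I}R_i\bigr)\times\bigl(\bigcap_{j\in J}R_j\bigr)\to\bigcap_{i\in I\cup J}R_i$ are the commutator maps $(a,b)\mapsto[a,b]$. Granting this identification, the generalized van Kampen theorem of Brown--Loday \cite{BL} computes $\pi_n(X)$ as the top group of the crossed $n$-cube, namely $\bigcap_{i=1}^n R_i$, modulo the normal subgroup generated by the images of all $h$-pairings. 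Since the image of $h$ on the pair indexed by a partition $I\sqcup J=\{1,\ldots,n\}$ is exactly $[\bigcap_{i\in I}R_i,\bigcap_{j\in J}R_j]$, this quotient is precisely
\[
\frac{R_1\cap\cdots\cap R_n}{\prod_{I\sqcup J=\{1,\ldots,n\}}[\bigcap_{i\in I}R_i,\bigcap_{j\in J}R_j]},
\]
which is the asserted formula.

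The connectivity hypothesis on each $(n-1)$-subtuple $(R_1,\ldots,\hat R_i,\ldots,R_n)$ is exactly what makes this work. The distributivity law \eqref{connectivity} says that forming intersections commutes with multiplying by the omitted normal subgroups, and this is precisely the condition guaranteeing that the fundamental groups at the lower vertices are the expected quotients, that the square faces induce the correct pushouts of fundamental groups, and hence that the $n$-cube is homotopy cocartesian enough for the van Kampen machine to apply and return $\pi_n$ rather than a shifted or larger invariant. In other words, the algebraic connectivity condition is the combinatorial shadow of the topological connectivity needed so that no contribution from $\pi_{<n}$ survives and the crossed $n$-cube is the genuine algebraic model of the relevant $(n+1)$-type.

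The main obstacle will be the identification of the fundamental crossed $n$-cube: proving that its values are exactly the intersections and that its $h$-maps are exactly the commutators, with no correction terms, and then certifying the precise hypotheses under which the Brown--Loday theorem yields $\pi_n$. This amounts to careful bookkeeping of the connectivity conditions to show the homotopy colimit is suitably connected relative to the terminal vertex and to rule out lower-degree interference. An alternative route, with the same connectivity bookkeeping as its crux, is to induct on $n$ by viewing the cube as a map of $(n-1)$-cubes in the last coordinate and applying a Blakers--Massey/fibration argument; I would keep this in reserve as a cross-check. I emphasize that Theorems~\ref{theorem1.1} and~\ref{theorem1.2} play no role in proving this statement itself; they are the purely group-theoretic tools that later rewrite the commutator denominator above into the symmetric commutator subgroup $[[R_1,R_2],\ldots,R_n]_S$.
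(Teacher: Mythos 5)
You should first be aware that the paper does not prove Theorem~\ref{theorem3.1} at all: it is imported verbatim from Ellis--Mikhailov \cite{EM} (the terminal $\Box$ marks it as quoted), and its only role in this paper is as input to the proof of Theorem~\ref{theorem1.3}. So there is no in-paper argument to compare yours against; the relevant comparison is with the cited source, and there your outline is essentially the strategy actually used: Ellis and Mikhailov prove their Theorem 1 via the Brown--Loday generalized van Kampen machinery for cubes of spaces \cite{BL} (in its crossed $n$-cube / $\mathrm{cat}^n$-group form), identifying the fundamental crossed $n$-cube as the one whose value on $S$ is $\bigcap_{i\in S}R_i$ with inclusions as structure maps and commutators as $h$-maps, the connectivity condition~\eqref{connectivity} being what makes that identification and the van Kampen hypotheses go through. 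Your closing remark is also accurate: Theorems~\ref{theorem1.1} and~\ref{theorem1.2} play no role in this statement; in the paper they enter only afterwards, in assertion (3) of the proof of Theorem~\ref{theorem1.3}, to rewrite the denominator $\prod_{I\cup J=\{1,\ldots,n\},\,I\cap J=\emptyset}[\bigcap_{i\in I}R_i,\bigcap_{j\in J}R_j]$ as $[[R_1,R_2],\ldots,R_n]_S$.

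Two cautions if you intend to flesh this out. First, your opening step is stated backwards: the homotopy cocartesian completion of the punctured cube has the homotopy colimit $X$ itself as its terminal vertex, not $B(G/\prod_{i=1}^{n}R_i)$; the cube completed by $B(G/\prod_{i=1}^{n}R_i)$ is in general \emph{not} homotopy cocartesian, and $\pi_n(X)$ is precisely a measure of its failure to be so --- if it were, one would have $X\simeq B(G/\prod_{i=1}^{n}R_i)$ and hence $\pi_n(X)=0$ for $n\geq 2$, contradicting, e.g., the $\pi_{n+1}(S^2)$ computation of \cite{Wu1} recovered from this theorem. Nor is $X$ the ``total cofibre'' of that cube. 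Second, what you defer as ``the main obstacle'' is genuinely where all the work lies in \cite{EM}: verifying that the colimit crossed $n$-cube has the intersections as values and the commutators as $h$-maps (this is exactly where connectedness is consumed), and separately justifying the extraction of $\pi_n(X)$ from the top group of the crossed $n$-cube modulo the images of the $h$-maps on complementary pairs --- the van Kampen theorem itself only identifies the crossed $n$-cube as a colimit; the passage from that algebraic model to $\pi_n$ is an additional step. As written, your text is a correct plan for the proof in the cited source rather than a proof.
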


\subsection{Proof of Theorem~\ref{theorem1.3}}

Theorem~\ref{theorem1.3} is part of the following statement.

\begin{thm}
Let $(X,A)$ be a pair of spaces and let $(A_1,\ldots,A_n)$ be a cofibrant $n$-partition of $X$ relative to $A$ with $n\geq 2$. Suppose that
\begin{enumerate}
\item[i)] For any proper subset $I=\{i_1,\ldots,i_k\}\subsetneq \{1,2,\ldots,n\}$, the union
$
\bigcup_{i\in I}A_i
$
is a path-connected $K(\pi,1)$-space.
\item[ii)] The inclusion $A \to A_i$ induces an epimorphism of the corresponding fundamental groups for each $1\leq i\leq n$.
\end{enumerate}
Let $R_i$ be the kernel of $\pi_1(A)\to \pi_1(A_i)$ for $1\leq i\leq n$. Then
\begin{enumerate}
\item For any proper subset $I=\{i_1,\ldots,i_k\}\subsetneq \{1,2,\ldots,n\}$,
$$
R_{i_1}\cap\cdots\cap R_{i_k}=[[R_{i_1}, R_{i_2}],\ldots,R_{i_k}]_S.
$$
\item The $(n-1)$-tuple $(R_1,\ldots,\hat R_i,\ldots,R_n)$ is connected for each $1\leq i\leq n$.
\item There is an isomorphism of groups
$$
\pi_n(X)\cong (R_1\cap R_2\cap \cdots\cap R_n)/[[R_1,R_2],\ldots,R_n]_S.
$$
\item For $1< k< n$, $I=\{i_1,\ldots,i_k\}\subsetneq \{1,2,\ldots,n\}$ and $J= \{1,2,\ldots,n\}\smallsetminus I$, there is an isomorphism
$$
\pi_k(X)\cong \left.\left(\bigcap_{s=1}^k(R_{i_s}\cdot \prod_{j\in J}R_j)\right)\right/\left(([[R_{i_1}, R_{i_2}],\ldots,R_{i_k}]_S)\cdot \prod_{j\in J}R_j \right).
$$
\end{enumerate}
\end{thm}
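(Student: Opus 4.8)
The plan is to realize $X$ as an Ellis--Mikhailov homotopy colimit, feed the connectivity information into Theorem~\ref{theorem3.1}, and then use the algebraic identities of Section~\ref{section2} to rewrite the resulting commutator subgroup as a symmetric commutator subgroup. First I would set up the geometric dictionary. Write $G=\pi_1(A)$. For a proper subset $I\subsetneq\{1,\ldots,n\}$, van Kampen's theorem applied to $\bigcup_{i\in I}A_i$, all of whose pairwise and higher intersections equal $A$, identifies $\pi_1(\bigcup_{i\in I}A_i)$ with the iterated amalgamated product of the quotients $G/R_i$ over $G$, namely $G/\prod_{i\in I}R_i$; here hypothesis (ii) guarantees that each $G\to G/R_i$ is surjective. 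Since hypothesis (i) makes each such union a $K(\pi,1)$, we get $\bigcup_{i\in I}A_i\simeq B(G/\prod_{i\in I}R_i)$ compatibly with the structure maps, so cofibrancy of the partition yields $X\simeq\hocolim_{I\subsetneq\{1,\ldots,n\}}\bigcup_{i\in I}A_i\simeq X(G;R_1,\ldots,R_n)$, and likewise each sub-union is the Ellis--Mikhailov space of its own data. Thus Theorem~\ref{theorem3.1} computes $\pi_n(X)$ as soon as connectivity is known.

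I would prove the four assertions together by induction on $n$. The base case $n=2$ is the Brown--Loday theorem, equivalently the $n=2$ instance of Theorem~\ref{theorem3.1} (the $1$-tuples are connected by definition), and assertion (4) is vacuous. For the inductive step I establish (1), (2), (3) in that order and deduce (4) last. For (1), fix a proper $I$ with $|I|=k<n$. The sub-partition $(A_i)_{i\in I}$ of $\bigcup_{i\in I}A_i$ relative to $A$ is a cofibrant $k$-partition satisfying (i) and (ii), so the inductive form of assertion (3) gives $\pi_k(\bigcup_{i\in I}A_i)\cong(R_{i_1}\cap\cdots\cap R_{i_k})/[[R_{i_1},R_{i_2}],\ldots,R_{i_k}]_S$; but $\bigcup_{i\in I}A_i$ is a $K(\pi,1)$ and $k\ge 2$, so this group vanishes, which is exactly (1).

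For (2), I would rewrite the connectivity identity $(\bigcap_{i\in I'}R_i)\prod_{j\in J'}R_j=\bigcap_{i\in I'}(R_i\prod_{j\in J'}R_j)$ inside the quotient $\bar G=G/\prod_{j\in J'}R_j=\pi_1(\bigcup_{j\in J'}A_j)$: it becomes the statement that the image of $\bigcap_{i\in I'}R_i$ equals $\bigcap_{i\in I'}\bar R_i$, where $\bar R_i$ is the kernel of $\pi_1(\bigcup_{j\in J'}A_j)\to\pi_1(\bigcup_{j\in J'\cup\{i\}}A_j)$. Assertion (1) for $I'$ shows the left side is the image of a symmetric commutator subgroup, hence the symmetric commutator subgroup of the $\bar R_i$; assertion (1) applied to the auxiliary cofibrant partition $(\bigcup_{j\in J'\cup\{i\}}A_j)_{i\in I'}$ relative to $\bigcup_{j\in J'}A_j$ shows the right side is the very same group, giving connectivity and in particular (2). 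For (3), I apply Theorem~\ref{theorem3.1} to $X\simeq X(G;R_1,\ldots,R_n)$ using (2), and identify the denominator $\prod[\bigcap_{i\in S}R_i,\bigcap_{j\in T}R_j]$, the product over partitions $S\sqcup T=\{1,\ldots,n\}$ into two nonempty parts, with $[[R_1,R_2],\ldots,R_n]_S$: assertion (1) rewrites each $\bigcap_{i\in S}R_i$ and $\bigcap_{j\in T}R_j$ as symmetric commutator subgroups, Lemma~\ref{lemma2.5} together with Theorem~\ref{theorem1.1} shows each resulting commutator lies in $[[R_1,R_2],\ldots,R_n]_S$, and conversely a left-normed generator $[[[g_{\sigma(1)},g_{\sigma(2)}],\ldots],g_{\sigma(n)}]$ already lies in $[\bigcap_{i\ne\sigma(n)}R_i,R_{\sigma(n)}]$ since the inner bracket lies in $\bigcap_{i\ne\sigma(n)}R_i$; thus the two subgroups coincide.

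Finally, for (4) with $1<k<n$, $I=\{i_1,\ldots,i_k\}$ and $J=\{1,\ldots,n\}\setminus I$, I would observe that $(\bigcup_{j\in J\cup\{i_s\}}A_j)_{s=1}^k$ is a cofibrant $k$-partition of $X$ relative to $\bigcup_{j\in J}A_j$ satisfying (i) and (ii), with $k<n$. The inductive assertion (3) computes $\pi_k(X)\cong(\bar R_{i_1}\cap\cdots\cap\bar R_{i_k})/[[\bar R_{i_1},\bar R_{i_2}],\ldots,\bar R_{i_k}]_S$, where now $\bar R_{i_s}$ is the image of $R_{i_s}$ in $\pi_1(\bigcup_{j\in J}A_j)=G/\prod_{j\in J}R_j$; pulling back along this quotient turns the intersection into $\bigcap_{s}(R_{i_s}\cdot\prod_{j\in J}R_j)$ and the symmetric commutator subgroup into $[[R_{i_1},R_{i_2}],\ldots,R_{i_k}]_S\cdot\prod_{j\in J}R_j$, which is precisely (4). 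I expect the main obstacle to be assertion (2): verifying the Ellis--Mikhailov connectivity condition is what unlocks Theorem~\ref{theorem3.1}, and it requires both the correct quotient-group reinterpretation and the observation that the auxiliary quotient data again form partitions of the required type, so that assertion (1) can be reused; by comparison the denominator identification in (3) is routine given Lemma~\ref{lemma2.5} and Theorem~\ref{theorem1.1}.
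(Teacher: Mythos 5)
Your proposal is correct and follows essentially the same route as the paper's proof: induction on $n$ with Brown--Loday as the base case, assertion (1) obtained by applying the inductive assertion (3) to a sub-partition whose union is a $K(\pi,1)$, connectivity (2) obtained by passing to the quotient $G/\prod_{j\in J'}R_j$ and using the auxiliary partition $(B\cup A_{i_s})_s$, the Ellis--Mikhailov theorem for (3) with the same two-sided identification of the denominator with $[[R_1,R_2],\ldots,R_n]_S$, and the same $\phi^{-1}$ pullback argument for (4). One small caveat in wording: what you call ``assertion (1) applied to the auxiliary cofibrant partition'' is, strictly speaking, a rerun of the \emph{proof} of (1) for the quotient data (inductive assertion (3) plus the fact that $\bigcup_{j\in I'\cup J'}A_j$ is a $K(\pi,1)$, which is how the paper derives its key identity $N_1\cap\cdots\cap N_k=[[N_1,N_2],\ldots,N_k]_S$), since assertion (1) as literally stated concerns only proper subsets of the original partition -- but your closing remark shows you have exactly this mechanism in mind, so this is a matter of phrasing rather than a gap.
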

\begin{proof}
We prove assertions (1)-(3) by induction on $n$. If $n=2$, assertions (1) and (2) hold obviously and assertion (3) follows from the classical Brown-Loday Theorem~\cite{BL}.

 Suppose that assertions (1)-(3) hold for all cofibrant $m$-partitions $(B_1,\ldots,B_m)$ of any space $Y$ relative to $B$ satisfying conditions (i) and (ii) with $m<n$.

Let $(X,A)$ be a pair of spaces and let $(A_1,\ldots,A_n)$ be a cofibrant $n$-partition of $X$ relative to $A$ with $n>2$.

(1). Let $I=\{i_1,\ldots,i_k\}\subsetneq \{1,2,\ldots,n\}$ be a proper subset. If $k=1$, then $R_{i_1}=[R_{i_1}]_S$ by definition. We may assume that $2\leq k<n$. Let $B_s=A_{i_s}$ for $s=1,\ldots, k$ and let $Y=B_1\cup\cdots\cup B_{k}$. Then $(B_1,\ldots,B_k)$ is a cofibrant $k$-partition of $Y$ relative to $A$ satisfying conditions (i) and (ii). Notice that
$$
\Ker(\pi_1(A)\to \pi_1(B_s))=\Ker(\pi_1(A)\to \pi_1(A_{i_s}))=R_{i_s}.
$$
By induction hypothesis, there is an isomorphism
$$
\pi_k(Y)\cong (R_{i_1}\cap R_{i_2}\cap \cdots\cap R_{i_k})/[[R_{i_1},R_{i_2}],\ldots,R_{i_k}]_S.
$$
From condition (i), $Y=A_{i_1}\cup\cdots\cup A_{i_k}$ is a $K(\pi,1)$-space (as $k<n$) and so $\pi_k(Y)=0$ (as $k\geq2$). Thus
$$
R_{i_1}\cap R_{i_2}\cap \cdots\cap R_{i_k}=[[R_{i_1},R_{i_2}],\ldots,R_{i_k}]_S,
$$
which is assertion~(1).

(2). We show that the $(n-1)$-tuple $(R_1,\ldots,R_{n-1})$ is connected. Let $m=n-1$. If $m=2$, then $(R_1,R_2)$ is connected by definition. Assume that $m\geq 3$. Let $I, J \subseteq \{1,\cdots,m\}$ with $ |I|\geq 2$ and $|J|\geq 1$. We have to show the connectivity condition that
$$
\left(\bigcap_{i\in I} R_i \right)\cdot  \prod_{j\in J}R_j =
\bigcap_{i\in I} \left( R_i\cdot \prod_{j\in J}R_j \right).
$$
Clearly
$$
\left(\bigcap_{i\in I} R_i \right)\cdot \prod_{j\in J}R_j\leq
\bigcap_{i\in I} \left( R_i\cdot \prod_{j\in J}R_j \right).
$$
Now we show the other direction. Let $I=\{i_1,\ldots,i_k\}$ with $2\leq k<n$. Let $B=\bigcup_{j\in J}A_j$ and let $B_s=B\cup A_{i_s}$ for $s=1,2,\ldots,k$. Let
$$
Y=\bigcup_{s=1}^kB_s=\bigcup_{i\in I, j\in J}A_i\cup A_j.
$$
Then $(B_1,\ldots,B_k)$ is a cofibrant $k$-partition of $Y$ relative to $B$. Clearly condition (i) holds. Let $G=\pi_1(A)$. Then $\pi_1(B)=G/\prod_{j\in J}R_j$ and
$$
\pi_1(B_s)=G/(R_{i_s}\cdot\prod_{j\in J}R_j).
$$
Thus condition (ii) holds. Let
$$
\begin{array}{rcl}
N_s&=&\Ker(\pi_1(B)\to \pi_1(B_s))\\
&=&\Ker(G/(\prod_{j\in J}R_j)\to G/(R_{i_s}\cdot\prod_{j\in J}R_j))\\
&=&(R_{i_s}\cdot\prod_{j\in J}R_j)/\prod_{j\in J}R_j\\
&=&R_{i_s}/(R_{i_s}\cap \prod_{j\in J}R_j).\\
\end{array}
$$
From induction hypothesis, we have
$$
\pi_k(Y)=(N_1\cap N_2\cap\cdots\cap N_k)/[[N_1,N_2],\ldots,N_k]_S.
$$
Since $
Y=\bigcup_{i\in I, j\in J}A_i\cup A_j$ with $I,J\subseteq\{1,2,\ldots,n-1\}$, the space $Y$ is a $K(\pi,1)$-space from condition (i). Thus $\pi_k(Y)=0$ and so
\begin{equation}\label{equation3.2}
N_1\cap N_2\cap\cdots\cap N_k=[[N_1,N_2],\ldots,N_k]_S
\end{equation}
with $N_s=R_{i_s}/(R_{i_s}\cap \prod_{j\in J}R_j)$ in $G/\prod_{j\in J}R_j$. Consider the quotient homomorphism
$$
\phi\colon G\longrightarrow G/\prod_{j\in J}R_j.
$$
Then
$$
N_s=R_{i_s}/(R_{i_s}\cap \prod_{j\in J}R_j)=\phi(R_{i_s})
$$
and so
$$
\begin{array}{rcl}
\phi\left([[R_{i_1},R_{i_2}],\ldots,R_{i_k}]_S\right)&=&[[\phi(R_{i_1}),\phi(R_{i_2})],\ldots,\phi(R_{i_k})]_S\\
&=&[[N_1,N_2],\ldots,N_k]_S\\
&=&N_1\cap N_2\cap\cdots\cap N_k \textrm{ by equation~(\ref{equation3.2})}.\\
\end{array}
$$
From the fact that
$$
\phi\left(\bigcap_{i\in I} \left( R_i\cdot \prod_{j\in J}R_j \right)\right)\leq \bigcap_{s=1}^k N_s
$$
together with equation~(\ref{equation3.2}),
we have
$$
\begin{array}{rcl}
\bigcap_{i\in I} \left( R_i\cdot \prod_{j\in J}R_j \right)&\leq&\phi^{-1}\left(\phi\left([[R_{i_1},R_{i_2}],\ldots,R_{i_k}]_S\right)\right)\\ &=&[[R_{i_1},R_{i_2}],\ldots,R_{i_k}]_S\cdot \prod_{j\in J}R_j\\
&\leq& \left(\bigcap_{s=1}^k R_{i_s}\right)\cdot\prod_{j\in J} R_j\\
&=&\left(\bigcap_{i\in I}R_i\right)\cdot \prod_{j\in J} R_j.\\
\end{array}
$$
This proves that $(R_1,\ldots,R_{n-1})$ is connected. Similarly, each $(R_1,\ldots, \hat{R}_i,\ldots, R_n)$ is connected for $1\leq i< n$ and
hence assertion (2).

(3). From assertion (2), each $(R_1,\ldots, \hat{R}_i,\ldots, R_n)$ is connected for $1\leq i\leq n$. Since $(A_1,\ldots,A_n)$ is a cofibrant partition of $X$ relative to $A$, $X$ is the homotopy colimit of the diagram given by the inclusions
$$
A\subseteq A_{j_1}\cup A_{j_2}\cup\cdots\cup A_{j_q}\subseteq A_{i_1}\cup A_{i_2}\cup\cdots\cup A_{i_p}
$$
with $\{j_1,\ldots,j_q\}\subseteq \{i_1,\ldots,i_p\}\subsetneq \{1,2,\ldots,n\}$. From Van Kampen's theorem
$$
\pi_1(A_{i_1}\cup \cdots\cup A_{i_k})=\pi_1(A)/\prod_{s=1}^k R_{i_s}.
$$
Thus $X=X(\pi_1(A);R_1,R_2,\ldots,R_n)$. From Theorem~\ref{theorem3.1}, we have
$$
\pi_n(X)\cong \frac{R_1\cap \cdots \cap R_n}{\prod_{I\cup J=\{1,\ldots,n\},{I\cap J=\emptyset}}
[\cap_{i\in I}R_i,\cap_{j\in J}R_j]}.
$$
It suffices to show that
$$
\prod_{I\cup J=\{1,\ldots,n\}, {I\cap J=\emptyset}}
\left[\bigcap_{i\in I}R_i,\bigcap_{j\in J}R_j\right]=[[R_1,R_2],\ldots,R_n]_S.
$$

Recall that $$[[R_1,R_2],\ldots,R_n]_S=\prod_{\sigma\in \Sigma_n}[[R_{\sigma(1)},R_{\sigma(2)}],\ldots,R_{\sigma(n)}].$$
For each $\sigma\in\Sigma_n$, let $I=\{\sigma(1),\ldots,\sigma(n-1)\}, J=\{\sigma(n)\}$, then
$$
\begin{array}{rcl}
[[R_{\sigma(1)},R_{\sigma(2)}],\ldots,R_{\sigma(n)}]&=&[[[R_{\sigma(1)},R_{\sigma(2)}],\ldots,R_{\sigma(n-1)}],R_{\sigma(n)}]\\
&&\\
&\leq &\left[\bigcap_{i\in I}R_i,\bigcap_{j\in J}R_j\right]\\
\end{array}
$$
and so
$$
[[R_1,R_2],\ldots,R_n]_S\leq \prod_{I\cup J=\{1,\ldots,n\}, {I\cap J=\emptyset}}
\left[\bigcap_{i\in I}R_i,\bigcap_{j\in J}R_j\right].
$$

Conversely let $I=\{i_1,\ldots,i_p\}$ and $J=\{j_1,\ldots,j_q\}$ with $1\leq p,q\leq n-1$, $I\cup J=\{1,\ldots,n\}$ and $I\cap J=\emptyset$. By assertion (1),
$$
\bigcap_{i\in I}R_i=[[R_{i_1},R_{i_2}],\ldots,R_{i_p}]_S
\textrm{
and }
\bigcap_{j\in J}R_j=[[R_{j_1},R_{j_2}],\ldots,R_{j_q}]_S.
$$
Thus
$$
\begin{array}{rcl}
\left[\bigcap_{i\in I}R_i,\bigcap_{j\in J}R_j\right]&=&\left[ [[R_{i_1},R_{i_2}],\ldots,R_{i_p}]_S,[[R_{j_1},R_{j_2}],\ldots,R_{j_q}]_S\right]\\
&\leq&[[R_1,R_2],\ldots,R_n]_S\\
\end{array}
$$
by Theorem~\ref{theorem1.1} because
$$
\left[ [[R_{i_1},R_{i_2}],\ldots,R_{i_p}]_S,[[R_{j_1},R_{j_2}],\ldots,R_{j_q}]_S\right]\leq [[R_1,R_2,\ldots,R_n]].
$$
This finishes the proof of assertion (3).

Assertion (4) follows from assertion (3) by constructing a new partition as follows: let $B=\bigcup_{j\in J}A_j$ and let $B_s=B\cup A_{i_s}$ for $s=1,2,\ldots,k$. Then
$$
X=\bigcup_{s=1}^kB_s=\bigcup_{i\in I, j\in J}A_i\cup A_j.
$$
and $(B_1,\ldots,B_k)$ is a cofibrant $k$-partition of $X$ relative to $B$. Condition (i) and (ii) hold similar as in the proof of assertion (2).  Let
$$
\begin{array}{rcl}
N_s&=&\Ker(\pi_1(B)\to \pi_1(B_s))\\
&=&R_{i_s}/(R_{i_s}\cap \prod_{j\in J}R_j).\\
\end{array}
$$
From assertion (3), we have $$
\pi_k(X)\cong \left(\bigcap_{s=1}^kN_s\right)/[[N_{1}, N_2],\ldots,N_k]_S.
$$
To finish the proof, it suffices to show that
\begin{equation}\label{equation3.3}
\begin{array}{rl}
 &\left(\bigcap_{s=1}^kN_s\right)/[[N_{1}, N_2],\ldots,N_k]_S\\
 \cong &(\bigcap_{s=1}^k(R_{i_s}\cdot \prod_{j\in J}R_j))/([[R_{i_1}, R_{i_2}],\ldots,R_{i_k}]_S\cdot \prod_{j\in J}R_j ).\\
 \end{array}
\end{equation}
Let $\phi\colon G\longrightarrow G/\prod_{j\in J}R_j$ be the quotient homomorphism. Then it is straightforward to check that
$$
\begin{array}{rcl}
\phi^{-1}(N_s)&=&R_{i_s}\cdot \prod_{j\in J}R_j,\\
&&\\
\phi^{-1}\left(\bigcap_{s=1}^kN_s\right)&=& \bigcap_{s=1}^k(R_{i_s}\cdot \prod_{j\in J}R_j),\\
&&\\
\phi^{-1}([[N_{1}, N_2],\ldots,N_k]_S)&=&[[R_{i_1}, R_{i_2}],\ldots,R_{i_k}]_S\cdot \prod_{j\in J}R_j.\\
\end{array}
$$
Thus equation~(\ref{equation3.3}) holds, hence assertion (4).

The proof is finished.
\end{proof}

\section{Applications to the Free Groups and Surface Groups}\label{section4}
\subsection{Subgroups of the Surface Groups}
Let $X=S$ be a path-connected compact $2$-manifold with or without boundary. Let $Q_i$ be a set of finite points in $S\smallsetminus \partial S$, $1\leq i\leq n$, such that
\begin{enumerate}
\item $Q_i\not=\emptyset$ for each $1\leq i\leq n$ and
\item $Q_i\cap Q_j=\emptyset$ for $i\not=j$.
\end{enumerate}
Let $
A=S\smallsetminus\left(\bigcup_{i=1}^n Q_i\right)
$
be the punctured surface and let $A_i=A\cup Q_i$. Then $(A_1,\ldots,A_n)$ is a cofibrant $n$-partition\footnote{One needs to do some modifications such that the cofibrant hypothesis holds: By replacing each punctured point by a small open disk in $S$, the resulting punctured surfaces become compact $2$-manifolds and so the cofibrant hypothesis for the partition $(A_1,\ldots,A_n)$ holds.}\label{footnote1} of $S$ relative to $A$.  For any subset $$\{i_1,\ldots,i_k\}\subsetneq \{1,2,\ldots,n\},$$ the space
$$
\bigcup_{s=1}^k A_{i_s}
$$
is a $K(\pi,1)$-space because it is a surface punctured  by at least
one point. Observe that each homomorphism
$$
\pi_1(A)\longrightarrow \pi_1(A_i)
$$
is an epimorphism. Let $R_i$ be the kernel of the epimorphism $\pi_1(A)\to \pi_1(A_i)$. By Theorem~\ref{theorem1.3}, we have
\begin{equation}\label{equation4.1}
\pi_n(S)\cong (R_1\cap R_2\cap\cdots\cap R_n)/[[R_1,R_2],\ldots,R_n]_S.
\end{equation}
We give a group theoretic interpretation of this isomorphism.

\bigskip

\noindent\textbf{Case 1.} $X=S^2$. Let $Q=\bigcup_{i=1}^nQ_i$ with
$$
Q=\{q_1,q_2,\ldots,q_m\}
$$
with a choice of order that $q_i<q_j$ for $i<j$. For each $q\in Q$, let $c_q$ be a generator in $\pi_1(A)$ represented by a small circle around the point $q$ with a choice of orientation such that $\pi_1(A)$ admits the presentation
$$
\pi_1(A)=\left\la c_q \ \left| \ q\in Q\ \prod_{j=1}^mc_{q_j}=1\right. \right\ra.
$$
Then $R_i=\la\la c_q \ | \ q\in Q_i\ra\ra$. From equation~(\ref{equation4.1}), we have the following result.

\begin{thm}\label{theorem4.1}
Let $G=\la x_1,x_2,\ldots,x_m \ | \ x_1x_2\cdots x_m=1\ra$ be the free group of rank $m-1$ with $m\geq2$. Let $n\geq 2$ and let $P_i$ be any subset of $\{x_1,x_2,\ldots,x_m\}$, $1\leq i\leq n$, such that
\begin{enumerate}
\item[(i)] $P_i\not=\emptyset$ for each $1\leq i\leq n$;
\item[(ii)] $P_i\cap P_j=\emptyset$ for $i\not=j$ and
\item[(iii)] $\bigcup_{i=1}^n P_i=\{x_1,x_2,\ldots,x_m\}$.
\end{enumerate}
Let $R_i=\la\la P_i\ra\ra$ be the normal closure of $P_i$ in $G$. Then there is an isomorphism of groups
$$
(R_1\cap R_2\cap \cdots\cap R_n)/[[R_1,R_2],\ldots,R_n]_S\cong\pi_n(S^2).
$$\hfill $\Box$
\end{thm}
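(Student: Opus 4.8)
The plan is to recognize the abstract group $G$ as the fundamental group of an $m$-punctured $2$-sphere and then read off the statement directly from equation~(\ref{equation4.1}), which has already been established in this subsection. First I would fix $m$ distinct points $q_1,\ldots,q_m$ on $S^2$ and define $Q_i=\{q_j\mid x_j\in P_i\}$; conditions (i)--(iii) imposed on the $P_i$ translate verbatim into the requirement that $Q_1,\ldots,Q_n$ be nonempty, pairwise disjoint, and cover $Q=\{q_1,\ldots,q_m\}$, which is exactly the standing hypothesis on the punctures. Setting $A=S^2\smallsetminus Q$ and $A_i=A\cup Q_i$, the tuple $(A_1,\ldots,A_n)$ is then a cofibrant $n$-partition of $S^2$ relative to $A$ after the disk-replacement described in the footnote above.

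Next I would pin down the isomorphism $G\cong\pi_1(A)$. The $m$-punctured sphere has the presentation $\pi_1(A)=\la c_q\mid q\in Q,\ \prod_{j=1}^m c_{q_j}=1\ra$ recorded above, where $c_{q_j}$ is the class of a small positively oriented loop around $q_j$, and this group is free of rank $m-1$. Sending $x_j\mapsto c_{q_j}$ gives an isomorphism $G\to\pi_1(A)$, since $G$ is by definition free of rank $m-1$ on $x_1,\ldots,x_m$ modulo the single relation $x_1x_2\cdots x_m=1$, which matches $\prod_j c_{q_j}=1$ once the ordering of the punctures and the orientations of the loops are chosen compatibly. Under this isomorphism the normal closure $\la\la P_i\ra\ra$ is carried onto $\la\la c_q\mid q\in Q_i\ra\ra$, which is precisely the kernel $R_i=\Ker(\pi_1(A)\to\pi_1(A_i))$ appearing in equation~(\ref{equation4.1}).

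It then remains only to confirm the two hypotheses of Theorem~\ref{theorem1.3} for this partition, both already noted in the surrounding discussion: every proper sub-union $\bigcup_{s=1}^k A_{i_s}$ is a sphere carrying at least one puncture, hence a path-connected $K(\pi,1)$-space, and each inclusion $A\hookrightarrow A_i$ merely fills in the punctures of $Q_i$ and so induces an epimorphism on fundamental groups. Invoking Theorem~\ref{theorem1.3} (equivalently, quoting equation~(\ref{equation4.1})) and transporting the resulting isomorphism along $G\cong\pi_1(A)$ yields
$$
(R_1\cap R_2\cap\cdots\cap R_n)/[[R_1,R_2],\ldots,R_n]_S\cong\pi_n(S^2),
$$
as claimed.

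The one point deserving genuine care---the \emph{main obstacle}---is the faithfulness of the dictionary between the combinatorial and topological pictures: I must verify that $x_j\mapsto c_{q_j}$ is an isomorphism respecting the chosen base relation (this is where the compatible choice of orientations and ordering of punctures enters) and that normal closures of the selected generating subsets correspond on the nose. Once this translation is in place the theorem is a direct specialization of Theorem~\ref{theorem1.3}, whose substantive content---the coincidence of the intersection subgroup with the symmetric commutator subgroup modulo the homotopy group---is doing all the real work.
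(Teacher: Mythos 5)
Your proposal is correct and follows essentially the same route as the paper: realize $G$ as $\pi_1$ of the $m$-punctured sphere via $x_j\mapsto c_{q_j}$, take the partition $A_i=A\cup Q_i$, identify each $R_i=\la\la P_i\ra\ra$ with the kernel $\Ker(\pi_1(A)\to\pi_1(A_i))$, and invoke Theorem~\ref{theorem1.3} in the form of equation~(\ref{equation4.1}). The only difference is presentational: you make explicit the dictionary between the abstract presentation of $G$ and the geometric generators $c_q$, which the paper's Case~1 treats implicitly.
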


If $n=m$ with $P_i=\{x_i\}$, then there is an isomorphism of groups
$$
(\la\la x_1\ra\ra\cap \la\la x_2\ra\ra\cap\cdots\cap \la\la x_n)/[[\la\la x_1\ra\ra,\la\la x_2\ra\ra], \ldots,\la\la x_n\ra\ra]_S\cong \pi_n(S^2),
$$
which is ~\cite[Theorem 1.7]{Wu1}. One interesting point of Theorem~\ref{theorem4.1} is that the factor group
$$
(R_1\cap R_2\cap\cdots \cap R_n)/[[R_1,R_2],\ldots,R_n]_S
$$
only depends on the length of the partition $(P_1,\ldots,P_n)$ of
$\{x_1,\ldots,x_m\}$, which does not seem obvious from the group
theoretic point of view.

\bigskip

\noindent\textbf{Case 2.} $X=\RP^2$. Let $Q=\bigcup_{i=1}^nQ_i$ with
$$
Q=\{q_1,q_2,\ldots,q_m\}.
$$
Then $\pi_1(A)$ admits a presentation
$$
\pi_1(A)=\left\la a_1, c_q \ \left| \ q\in Q,\ a_1^2=\prod_{i=1}^mc_{q_i}\right.\right\ra
$$
with $R_i=\la\la c_q \ | \ q\in Q_i\ra\ra$. Note that $\pi_n(\RP^2)\cong \pi_n(S^2)$ for $n\geq 2$. From equation~(\ref{equation4.1}), we have the following result.

\begin{thm}\label{theorem4.2}
Let $G=\la a_1, x_1,x_2,\ldots,x_m \ | \ a_1^2=x_1x_2\cdots x_m\ra$ with $m\geq 2$. Let $n\geq 2$ and let $P_i$ be any subset of $\{x_1,x_2,\ldots,x_m\}$, $1\leq i\leq n$, such that
\begin{enumerate}
\item[(i)] $P_i\not=\emptyset$ for each $1\leq i\leq n$;
\item[(ii)] $P_i\cap P_j=\emptyset$ for $i\not=j$ and
\item[(iii)] $\bigcup_{i=1}^n P_i=\{x_1,x_2,\ldots,x_m\}$.
\end{enumerate}
Let $R_i=\la\la P_i\ra\ra$ be the normal closure of $P_i$ in $G$. Then there is an isomorphism of groups
$$
(R_1\cap R_2\cap \cdots\cap R_n)/[[R_1,R_2],\ldots,R_n]_S\cong\pi_n(S^2).
$$\hfill $\Box$
\end{thm}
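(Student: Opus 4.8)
The plan is to recognize the abstract group $G$ as the fundamental group of a real projective plane with $m$ punctures, so that the theorem becomes a direct instance of the general isomorphism~(\ref{equation4.1}) combined with $\pi_n(\RP^2)\cong\pi_n(S^2)$. All of the homotopy-theoretic content has already been packaged into Theorem~\ref{theorem1.3}, so the work is purely one of translation between the algebraic data of the statement and the topological data of the construction preceding Case~2.

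Concretely, I would take $S=\RP^2$ and choose $m$ distinct interior points $q_1,\ldots,q_m$, and set $Q_i=\{q_j \mid x_j\in P_i\}$. By hypotheses (i)--(iii) on the $P_i$, the sets $Q_i$ are nonempty, pairwise disjoint, and satisfy $\bigcup_{i=1}^nQ_i=\{q_1,\ldots,q_m\}$, so the construction preceding Case~2 applies verbatim: with $A=\RP^2\smallsetminus Q$ and $A_i=A\cup Q_i$, the tuple $(A_1,\ldots,A_n)$ is a cofibrant $n$-partition of $\RP^2$ relative to $A$ meeting conditions (i) and (ii) of Theorem~\ref{theorem1.3}. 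The presentation of $\pi_1(A)$ recorded above,
$$
\pi_1(A)=\left\la a_1,c_q \ \left| \ q\in Q,\ a_1^2=\prod_{i=1}^mc_{q_i}\right.\right\ra,
$$
matches the presentation defining $G$ under the assignment $x_j\mapsto c_{q_j}$, giving an isomorphism $G\cong\pi_1(A)$ that carries $P_i$ onto $\{c_q\mid q\in Q_i\}$.

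It then remains to match the two families of subgroups and conclude. Filling in the punctures in $Q_i$ realizes $\pi_1(A_i)$ as the quotient of $\pi_1(A)$ by the normal closure of the meridians $c_q$ with $q\in Q_i$, so the kernel of $\pi_1(A)\to\pi_1(A_i)$ is exactly $\la\la c_q\mid q\in Q_i\ra\ra$; under the isomorphism above this is $R_i=\la\la P_i\ra\ra$. Feeding these identifications into equation~(\ref{equation4.1}) yields
$$
\pi_n(\RP^2)\cong (R_1\cap R_2\cap\cdots\cap R_n)/[[R_1,R_2],\ldots,R_n]_S,
$$
and since the universal cover of $\RP^2$ is $S^2$ we have $\pi_n(\RP^2)\cong\pi_n(S^2)$ for $n\geq2$, which gives the claimed isomorphism. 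I do not expect any genuine obstacle here: the only point requiring care is the bookkeeping in the dictionary between the algebraic generators $x_j$ and the topological meridians $c_{q_j}$, in particular confirming that filling a puncture kills precisely the normal closure of its meridian, so that $R_i=\la\la P_i\ra\ra$ corresponds to the kernel appearing in~(\ref{equation4.1}). Everything else is a transcription of the already-proven Theorem~\ref{theorem1.3}.
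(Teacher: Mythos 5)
Your proposal is correct and follows essentially the same route as the paper: the paper's own treatment (Case 2 of Section~\ref{section4}) consists precisely of puncturing $\RP^2$ at $Q=\bigcup_i Q_i$, identifying $\pi_1(A)$ with $G$ via the presentation $\la a_1, c_q \mid q\in Q,\ a_1^2=\prod_{i=1}^m c_{q_i}\ra$ and $R_i$ with $\la\la c_q \mid q\in Q_i\ra\ra$, and then invoking equation~(\ref{equation4.1}) together with $\pi_n(\RP^2)\cong\pi_n(S^2)$ for $n\geq 2$. Your extra care in spelling out the dictionary $x_j\mapsto c_{q_j}$ and the fact that filling a puncture kills exactly the normal closure of its meridian is just an explicit version of what the paper leaves implicit.
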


\noindent\textbf{Remark.} In the above theorem, $G\cong \pi_1(\RP^2\smallsetminus Q_m)$ is a free group of rank $m$ with the presentation given in the form as in the statement. Thus the subgroups $R_i$ are different from those given in Theorem~\ref{theorem4.1}.

\bigskip

\noindent\textbf{Case 3.} $X\not=S^2$ or $\RP^2$. Let $Q=\bigcup_{i=1}^nQ_i$ with
$$
Q=\{q_1,q_2,\ldots,q_m\}.
$$
If $X$ is an oriented surface of genus $g$ with $t$ boundary components, then $\pi_1(A)$ admits a presentation
$$
\pi_1(A)=\left\la a_1,b_1,\ldots,a_g,b_g, d_1,\ldots,d_t, c_q \ \left| \ q\in Q \ \prod_{i=1}^g[a_i,b_i]=\prod_{j=1}^td_j\cdot \prod_{i=1}^m c_{q_i}\right. \right\ra
$$
with $R_i=\la\la c_q \ | \ q\in Q_i\ra\ra$. If $X$ is a non-oriented surface of genus $h$ with $t$ boundary components, then $\pi_1(A)$ admits a presentation
$$
\pi_1(A)=\left\la a_1,\ldots,a_h, d_1,\ldots,d_t,c_q \ \left| \ q\in Q \ \prod_{i=1}^h a_i^2=\prod_{j=1}^td_j\cdot \prod_{i=1}^m c_{q_i}\right. \right\ra
$$
with $R_i=\la\la c_q \ | \ q\in Q_i\ra\ra$. Note that $\pi_n(X)=0$ for $n\geq 2$. From equation~(\ref{equation4.1}), we have the following result.

\begin{thm}\label{theorem4.3}
Let $G$ be one of the following surface groups:
\begin{enumerate}
\item[(a)] $\la a_1,b_1,\ldots,a_g,b_g, y_1,\ldots,y_t, x_1,\ldots,x_m \ | \ \prod_{i=1}^g[a_i,b_i]=\prod_{j=1}^ty_j\cdot \prod_{i=1}^m x_i\ra$
with $g>0$ or $t>0$.
\item[(b)]  $\la a_1,\ldots,a_h, y_1,\ldots,y_t, x_1,\ldots,x_m \ | \ \prod_{i=1}^h a_i^2=\prod_{j=1}^ty_j\cdot \prod_{i=1}^m x_i\ra$
with $h>1$ or $t>0$.
\end{enumerate}
Let $P_i$ be any subset of $\{x_1,x_2,\ldots,x_m\}$, $1\leq i\leq n$, such that
\begin{enumerate}
\item[(i)] $P_i\not=\emptyset$ for each $1\leq i\leq n$;
\item[(ii)] $P_i\cap P_j=\emptyset$ for $i\not=j$ and
\item[(iii)] $\bigcup_{i=1}^n P_i=\{x_1,x_2,\ldots,x_m\}$.
\end{enumerate}
Let $R_i=\la\la P_i\ra\ra$ be the normal closure of $P_i$ in $G$. Then
$$
R_1\cap R_2\cap\cdots\cap R_n=[[R_1,R_2],\ldots,R_n]_S.
$$\hfill $\Box$
\end{thm}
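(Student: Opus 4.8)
The plan is to realize the abstract group $G$ as the fundamental group of a punctured surface and then invoke Theorem~\ref{theorem1.3} together with the asphericity of every compact surface other than $S^2$ and $\RP^2$. The case $n=1$ is trivial, since both sides equal $R_1$, so throughout I assume $n\geq 2$.

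First I would set up the geometry exactly as in Case~3 of the present section. In case~(a), $G$ is the fundamental group $\pi_1(A)$ of the oriented surface $A$ of genus $g$ with $t$ boundary components carrying $m$ additional punctures $q_1,\ldots,q_m$, where $x_i$ is represented by a small loop around $q_i$ and $y_j$ by a loop around the $j$th boundary component; case~(b) is the analogous non-orientable surface. Let $X$ be the surface obtained from $A$ by filling in the $m$ punctures, and for each $i$ set $Q_i=\{q_j\mid x_j\in P_i\}$ and $A_i=A\cup Q_i$. As recalled in Case~3, $(A_1,\ldots,A_n)$ is then a cofibrant $n$-partition of $X$ relative to $A$, and $R_i=\la\la P_i\ra\ra=\Ker(\pi_1(A)\to\pi_1(A_i))$.

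Next I would check the hypotheses of Theorem~\ref{theorem1.3}, which is precisely the content of the Case~3 discussion: for each proper subset $I\subsetneq\{1,\ldots,n\}$ the union $\bigcup_{i\in I}A_i$ is a surface still carrying at least one puncture, hence a path-connected $K(\pi,1)$-space, and each $\pi_1(A)\to\pi_1(A_i)$ is surjective since filling punctures only adds relations. Theorem~\ref{theorem1.3} then yields the isomorphism~(\ref{equation4.1}),
$$
\pi_n(X)\cong (R_1\cap R_2\cap\cdots\cap R_n)/[[R_1,R_2],\ldots,R_n]_S.
$$

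The decisive step is to observe that $X$ is aspherical. The hypotheses ``$g>0$ or $t>0$'' in case~(a) and ``$h>1$ or $t>0$'' in case~(b) are exactly the conditions excluding $X$ from being $S^2$ or $\RP^2$: every remaining compact surface either has non-empty boundary, hence deformation retracts onto a graph and is a $K(\pi,1)$ for a free group, or is a closed surface of genus $\geq 1$ (orientable) or $h\geq 2$ (non-orientable), whose universal cover is $\R^2$ or the hyperbolic plane and is therefore contractible. In all these cases $\pi_n(X)=0$ for $n\geq 2$, so the right-hand side of~(\ref{equation4.1}) collapses and gives $R_1\cap\cdots\cap R_n=[[R_1,R_2],\ldots,R_n]_S$. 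I expect no serious obstacle here; the only points needing care are the routine bookkeeping that the presentations~(a) and~(b) genuinely coincide with $\pi_1$ of the punctured surfaces of Case~3, and the verification that the stated inequalities on $g,t,h$ are equivalent to $X$ being neither $S^2$ nor $\RP^2$.
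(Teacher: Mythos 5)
Your proposal is correct and follows essentially the same route as the paper: the paper's proof of Theorem~\ref{theorem4.3} is exactly the Case~3 discussion, namely realizing $G$ as $\pi_1$ of the punctured surface $A$, taking the cofibrant partition $A_i=A\cup Q_i$ of the filled-in surface $X$, applying Theorem~\ref{theorem1.3} to obtain the isomorphism~(\ref{equation4.1}), and concluding from $\pi_n(X)=0$ for $n\geq 2$ since the hypotheses on $g,h,t$ exclude $X=S^2$ and $X=\RP^2$. Your added remarks (the trivial case $n=1$, the explicit asphericity argument via universal covers or retraction onto a graph, and the check that the presentations match) only make explicit what the paper leaves implicit.
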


\subsection{Homotopy Groups of Higher Dimensional Spheres and Free Products of Surface Groups}
It is a natural question whether one can get similar group
theoretical descriptions of the (general) homotopy groups of
higher-dimensional spheres. We give some remarks that the homotopy
groups of certain $2$-dimensional complexes contains the homotopy
groups of all of higher-dimensional spheres as summands. From this,
we can answer the above question in some sense.

Let $X$ and $Y$ be path-connected spaces. According to~\cite{Gray}, there is a homotopy decomposition
\begin{equation}\label{equation4.2}
\Omega(X\vee Y)\simeq \Omega X\times \Omega Y\times \Omega\Sigma(\Omega X\wedge \Omega Y).
\end{equation}

Now let $X=S^2$ and let $Y$ be any surface. Equation~(\ref{equation4.2}) implies the following homotopy decomposition
\begin{equation}\label{equation4.3}
\Omega(S^2\vee Y)\simeq  \Omega S^2\times \Omega Y\times\Omega(\Sigma \Omega S^2\wedge \Omega Y).
\end{equation}
From the classical James Theorem~\cite{James1}, there is homotopy decomposition
$$
\Sigma\Omega\Sigma Z\simeq \bigvee_{k=1}^\infty \Sigma Z^{\wedge k},
$$
where $Z^{\wedge k}$ is the $k$-fold self smash product of $Z$. Note that the $k$-fold self smash product of $S^1$ is $S^k$. There is a homotopy decomposition
$$
\Sigma\Omega S^2=\Sigma\Omega\Sigma S^1\simeq \bigvee_{k=1}^\infty \Sigma S^k=\bigvee_{k=2}^\infty S^k.
$$
By substituting this decomposition into formula~(\ref{equation4.3}), there is a homotopy decomposition
$$
\begin{array}{rcl}
\Omega(S^2\vee Y)&\simeq & \Omega S^2\times \Omega Y\times\Omega \left(\bigvee_{k=2}^\infty S^k\wedge\Omega Y\right)\\
&\simeq& \Omega S^2\times \Omega Y \times \left(\prod_{k=2}^\infty \Omega (S^k\wedge \Omega Y) \right) \times\textrm{ other factors}.\\
\end{array}
$$
If $Y$ is a surface with $Y\not=S^2$, $\RP^2$ and $D^2$, then $\Omega Y$ is homotopy equivalent to a discrete space of a countably infinite set. Then $S^k\wedge Y$ is a wedge of countably infinite copies of $S^k$. Thus given any $k\geq2$,
$\pi_q(S^k)$ is a summand of $\pi_q(S^2\vee Y)$ for each $q$ with infinite multiplicity. If $Y=S^2$ or $\RP^2$, then we can repeat the above decomposition formula and obtain the fact that, given any $k\geq 2$, $\pi_q(S^k)$ is a summand of $\pi_q(S^2\vee Y)$ for each $q$ (with infinite multiplicity).

Hence it suffices to consider $\pi_*(S^2\vee Y)$ for any surface $Y\not=D^2$. By the same arguments in the previous subsection, we can get partitions of the space $S^2\vee Y$ by removing points from $S^2$ and $Y$. This gives the following result.

\begin{thm}
Let $G_1=\la x_{11},x_{12},\ldots,x_{1m} \ | \ x_{11}x_{12}\cdots x_{1m}=1\ra$ and let $G_2$ be one of the following surface groups:
\begin{enumerate}
\item[(a)] $\la a_1,b_1,\ldots,a_g,b_g, y_1,\ldots,y_t, x_{21},\ldots,x_{2m} \ | \ \prod\limits_{i=1}^g[a_i,b_i]=\prod\limits_{j=1}^ty_j\cdot \prod\limits_{i=1}^m x_{2i}\ra$.
\item[(b)]  $\la a_1,\ldots,a_h, y_1,\ldots,y_t, x_{21},\ldots,x_{2m} \ | \ \prod\limits_{i=1}^h a_i^2=\prod\limits_{j=1}^ty_j\cdot \prod\limits_{i=1}^m x_{2i}\ra$.
\end{enumerate}
Let $P_{ji}$ be any subset of $\{x_{j1},x_{j2},\ldots,x_{jm}\}$, $j=1,2$ and $1\leq i\leq n$, such that
\begin{enumerate}
\item[(i)] $P_{ji}\not=\emptyset$ for each $1\leq i\leq n$;
\item[(ii)] $P_{ji}\cap P_{jk}=\emptyset$ for $i\not=k$ and
\item[(iii)] $\bigcup_{i=1}^n P_{ji}=\{x_{j1},x_{j2},\ldots,x_{jm}\}$.
\end{enumerate}
Let $R_i=\la\la P_{1i},P_{2i}\ra \ra$ be the normal closure of $P_{1i}\cup P_{2i}$ in the free product $G=G_1\ast G_2$. Then there is an isomorphism of groups
$$
\pi_n(S^2\vee Y)\cong (R_1\cap R_2\cap\cdots\cap R_n)/[[R_1,R_2],\ldots,R_n]_S,
$$
where $Y$ is a surface such that the fundamental group of $Y$ punctured by $m$ points is the group $G_2$.\hfill $\Box$
\end{thm}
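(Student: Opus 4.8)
The plan is to realize $S^2\vee Y$ as the union of a cofibrant $n$-partition satisfying the hypotheses of Theorem~\ref{theorem1.3}, in direct analogy with the surface cases above. First I would choose $m$ points $Q_1=\{q_{11},\ldots,q_{1m}\}$ in $S^2$ and $m$ points $Q_2=\{q_{21},\ldots,q_{2m}\}$ in $Y$, all lying away from the wedge point (and from $\partial Y$), with meridians chosen so as to realize the generators $x_{1j}$ of $G_1$ and $x_{2j}$ of $G_2$. Put $A=(S^2\vee Y)\smallsetminus(Q_1\cup Q_2)$. Since $\pi_1(S^2\smallsetminus Q_1)\cong G_1$ and, by hypothesis, $\pi_1(Y\smallsetminus Q_2)\cong G_2$, Van Kampen's theorem gives $\pi_1(A)\cong G_1\ast G_2=G$ with the two families of meridians identified with the $x_{1j}$ and $x_{2j}$. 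For each $1\leq i\leq n$ let $A_i$ be $A$ with exactly those punctures refilled whose meridians lie in $P_{1i}\cup P_{2i}$. Conditions (ii) and (iii) on the $P_{ji}$ give $A=A_i\cap A_j$ for $i\neq j$ and $\bigcup_{i=1}^nA_i=S^2\vee Y$, so $(A_1,\ldots,A_n)$ is an $n$-partition of $S^2\vee Y$ relative to $A$; cofibrancy is arranged by the disk-replacement modification used for the surface partitions above.

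Next I would check the two hypotheses of Theorem~\ref{theorem1.3}. Filling in a puncture kills exactly its meridian, so $\pi_1(A)\to\pi_1(A_i)$ is the quotient by the normal closure of the block-$i$ meridians. This is an epimorphism, which is hypothesis (ii), and it identifies $\Ker(\pi_1(A)\to\pi_1(A_i))=\la\la P_{1i},P_{2i}\ra\ra=R_i$, matching the subgroups in the statement. For hypothesis (i), take any proper $I\subsetneq\{1,\ldots,n\}$ and choose $j\notin I$; since $P_{1j}\neq\emptyset$ and $P_{2j}\neq\emptyset$, at least one puncture survives in each summand, so $\bigcup_{i\in I}A_i$ is a wedge of a punctured $S^2$ with a punctured copy of $Y$, each of which is a non-closed surface and hence homotopy equivalent to a graph, in particular a $K(\pi,1)$.

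The one genuinely topological point is that this wedge is itself aspherical. The hard part will be to invoke the standard fact that $K(G,1)\vee K(H,1)$ is a model for $K(G\ast H,1)$, equivalently that a wedge of aspherical CW-complexes (with nondegenerate basepoints) is aspherical, which one sees by observing that the universal cover is built as a tree of contractible copies of the two universal covers and is therefore contractible. Granting this, $\bigcup_{i\in I}A_i$ is a path-connected $K(\pi,1)$-space for every proper $I$, so hypothesis (i) holds. Note that the total space $S^2\vee Y$ itself is not asked to be aspherical, and indeed is not; this is precisely what lets $\pi_n(S^2\vee Y)$ be nonzero.

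With both hypotheses verified and the kernels identified as $R_i=\la\la P_{1i},P_{2i}\ra\ra$, applying part~(2) of Theorem~\ref{theorem1.3} with $X=S^2\vee Y$ yields
$$
\pi_n(S^2\vee Y)\cong (R_1\cap R_2\cap\cdots\cap R_n)/[[R_1,R_2],\ldots,R_n]_S,
$$
which is the asserted isomorphism.
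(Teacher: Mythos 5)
Your proposal is correct and takes essentially the same approach as the paper: the paper obtains this theorem by exactly the punctured-partition argument of the preceding subsection, applied to $S^2\vee Y$ with $m$ points removed from each wedge summand, followed by an appeal to Theorem~\ref{theorem1.3}. Your write-up merely makes explicit the details the paper compresses into ``by the same arguments in the previous subsection'' --- the Van Kampen identification $\pi_1(A)\cong G_1\ast G_2$, the identification of the kernels with $R_i=\la\la P_{1i},P_{2i}\ra\ra$, and the standard fact that a wedge of aspherical complexes is aspherical.
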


\section{Applications to Braid Groups and the Proof of Theorem~\ref{theorem1.5}}\label{section5}

\subsection{Applications to the Braid Groups}
Recall that a braid of $n$ strands is called \textit{Brunnian} if deleting any one of the strands produces a trivial braid of $(n-1)$-strands. Let $\Brun_n$ denote the Brunnian subgroup of the $n\,$th pure Artin braid group $P_n$. (\textbf{Note.} An $n$-strand Brunnian braid is always a pure braid for $n\geq3$ according to ~\cite[Proposition 4.2.2]{BCWW}.) The group $\Brun_n$ has been characterized by Levinson ~\cite{Levinson1,Levinson2} for $n\leq4$. A classical question proposed by Makanin~\cite{Makanin} in 1980 is to determine a set of generators for $\Brun_n$. This question has been answered by Johnson~\cite{Johnson} and Stanford~\cite{Stanford}. An answer using fat commutators was given in~\cite[Theorem 8.6.1]{BCWW}. As an application, we gave a generalization of Levinson's result~\cite[Theorem 2]{Levinson2}.

Recall that the braid group $B_n$ is generated by $\sigma_1,\ldots,\sigma_{n-1}$ with defining relations given by
\begin{enumerate}
\item $\sigma_i\sigma_j=\sigma_j\sigma_i$ for $|i-j|\geq 2$;
\item $\sigma_i\sigma_{i+1}\sigma_i=\sigma_{i+1}\sigma_i\sigma_{i+1}$.
\end{enumerate}
Following Levinson's notation in~\cite{Levinson2}, let
$$
t_i=\sigma_i\sigma_{i+1}\cdots\sigma_{n-2}\sigma_{n-1}^2\sigma_{n-2}^{-1}\cdots\sigma_i^{-1}
$$
for $1\leq i\leq n-1$. Intuitively $t_i$ is the braid that links strand $i$ and strand $n$ in front of all other strands. Let $R_i=\la\la t_i\ra\ra$ be the normal closure of $t_i$ in $P_n$. (\textbf{Note.} Levinson uses the notation $\theta_i$.)
\begin{thm}\label{theorem5.1}
For each $n\geq2$, $\Brun_n=[[R_1,R_2],\ldots,R_{n-1}]_S$.
\end{thm}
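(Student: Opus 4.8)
The plan is to establish the two inclusions separately: the inclusion $[[R_1,R_2],\ldots,R_{n-1}]_S\subseteq \Brun_n$ by a direct strand-deletion computation, and the reverse inclusion by importing the fat-commutator description of the Brunnian subgroup and then collapsing it with Theorem~\ref{theorem1.1}. First I would record the setup. Let $d_k\colon P_n\to P_{n-1}$ be the homomorphism erasing the $k$th strand, for $1\le k\le n$. A pure braid is Brunnian precisely when it lies in every $\ker d_k$, so $\Brun_n=\bigcap_{k=1}^n\ker d_k$. Since $t_i$ links only strands $i$ and $n$, we have $d_i(t_i)=1$ and $d_n(t_i)=1$; in particular each $t_i$ lies in the normal subgroup $\ker d_n$, so that $R_i=\la\la t_i\ra\ra\subseteq\ker d_n$, and each $R_i$ is normal in $P_n$.

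For the inclusion $[[R_1,R_2],\ldots,R_{n-1}]_S\subseteq\Brun_n$ I would test a typical generator $[[g_{\sigma(1)},g_{\sigma(2)}],\ldots,g_{\sigma(n-1)}]$, where $\sigma\in\Sigma_{n-1}$ and $g_j\in R_j$, against each $d_k$. If $1\le k\le n-1$, then $d_k(R_k)=\la\la d_k(t_k)\ra\ra=1$ because $d_k$ is a surjective homomorphism carrying $t_k$ to $1$; as each of $R_1,\ldots,R_{n-1}$ occurs exactly once among the arguments of the left-normed commutator, the argument drawn from $R_k$ becomes trivial under $d_k$, and a left-normed commutator one of whose arguments is trivial is itself trivial. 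If $k=n$, then every argument lies in some $R_j\subseteq\ker d_n$, so the whole commutator dies under $d_n$. Hence each such generator lies in $\bigcap_{k=1}^n\ker d_k=\Brun_n$, proving the inclusion.

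For the reverse inclusion I would invoke the fat-commutator description of the Brunnian subgroup in~\cite[Theorem 8.6.1]{BCWW}, which identifies $\Brun_n$ with the fat commutator subgroup $[[R_1,R_2,\ldots,R_{n-1}]]$ of the normal closures of the linking generators of $\ker d_n$. Granting this, Theorem~\ref{theorem1.1} applied in $G=P_n$ to the normal subgroups $R_1,\ldots,R_{n-1}$ gives $[[R_1,R_2,\ldots,R_{n-1}]]=[[R_1,R_2],\ldots,R_{n-1}]_S$, and combining the two inclusions yields the asserted equality. The case $n=2$ is immediate from the convention $[R_1]_S=R_1=\la\la\sigma_1^2\ra\ra=P_2=\Brun_2$.

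The step I expect to be the main obstacle is the interface with~\cite{BCWW}: that reference phrases its fat-commutator description using the Artin generators $A_{i,n}$ rather than Levinson's $t_i$, so I must check that $\la\la t_i\ra\ra=\la\la A_{i,n}\ra\ra$ as normal subgroups of $P_n$. This reduces to the fact that $t_i$ and $A_{i,n}$ represent conjugate elements of $P_n$, both being a single loop of strand $n$ about strand $i$ (for instance $t_{n-1}=\sigma_{n-1}^2=A_{n-1,n}$); once this is verified, the two free bases $\{t_i\}$ and $\{A_{i,n}\}$ of $\ker d_n$ produce the same family of normal closures and the cited description transfers verbatim.
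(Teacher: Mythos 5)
Your proof is essentially correct, but it takes a genuinely different route from the paper's. The paper does not split the equality into two inclusions at all: it compares the Fadell--Neuwirth fibration $\R^2\smallsetminus Q_{n-1}\to F(\R^2,n)\to F(\R^2,n-1)$ with its counterpart one strand down to show that $R_i=\Ker(d_{i*})\cap\Ker(d_{n*})$, hence $\Brun_n=R_1\cap\cdots\cap R_{n-1}$, and that this intersection is carried isomorphically by the fiber inclusion $f_*$ to the corresponding intersection of normal closures $R_i'$ inside the free group $\pi_1(\R^2\smallsetminus Q_{n-1})$; Theorem~\ref{theorem4.3} (the punctured-disk output of the Ellis--Mikhailov machinery behind Theorem~\ref{theorem1.3}) then gives $\bigcap_i R_i'=[[R_1',R_2'],\ldots,R_{n-1}']_S$, and pushing forward by $f_*$ finishes the proof. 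You instead prove the easy inclusion by a direct strand-deletion computation (correct, and a useful sanity check the paper never writes down) and obtain the hard inclusion from the fat-commutator answer to Makanin's question in \cite[Theorem 8.6.1]{BCWW} combined with Theorem~\ref{theorem1.1}. This is a legitimate and shorter derivation---it exhibits Theorem~\ref{theorem5.1} as formally ``\cite{BCWW} plus Theorem~\ref{theorem1.1}''---but it rests on two external verifications that the paper's proof avoids: that \cite[Theorem 8.6.1]{BCWW} really asserts the equality $\Brun_n=[[\la\la A_{1,n}\ra\ra,\ldots,\la\la A_{n-1,n}\ra\ra]]$ for all $n\geq 2$ (the paper cites that result only descriptively, so its exact form and range of $n$ must be checked), and that $\la\la t_i\ra\ra=\la\la A_{i,n}\ra\ra$ in $P_n$. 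The latter you correctly flag, and it does hold: $t_i$ is conjugate to $A_{i,n}$ by a word in the elements $A_{j,n}$ with $j>i$ (for instance $t_1=A_{2,3}^{-1}A_{1,3}A_{2,3}$ in $P_3$). What the paper's longer route buys is independence from the simplicial-group methods of \cite{BCWW} and the intermediate identification $\Brun_n=R_1\cap\cdots\cap R_{n-1}$ as an intersection of $n-1$ normal closures, which is exactly the form that makes the theorem a self-contained application of its own Theorem~\ref{theorem1.3} and a direct generalization of Levinson's theorem~\cite{Levinson2}.
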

 When $n=4$, we have $\Brun_4=[[R_1,R_2],R_3]_S=[[R_1,R_2],R_3]\cdot [[R_1,R_3],R_2]$, which is exactly Levinson's theorem~\cite[Theorem 2]{Levinson2}. Let $M$ be any manifold. Recall that the coordinate projection
$$
F(M,n)\longrightarrow F(M,k)\quad (z_1,\ldots,z_n)\mapsto (z_{i_1},\ldots,z_{i_k}) \textrm{ for given } i_1<\cdots<i_k
$$
is a fibration by Fadell-Neuwirth's Theorem~\cite{FN}. The coordinate projection
$$
d_j\colon F(\R^2,n)\longrightarrow F(\R^2,n-1),\quad (z_1,\ldots,z_n)\mapsto (z_1,\ldots,\hat z_j,\ldots, z_n)$$ induces a group homomorphism
$$
d_{j*}\colon P_n=\pi_1(F(\R^2,n))\longrightarrow P_{n-1}=\pi_1(F(\R^2,n-1))
$$
which is given by removing the $j\,$th strand.
\begin{proof}[Proof of Theorem~\ref{theorem5.1}]
The basepoint $(q_1, q_2, \ldots, q_n)$ of $F(\R^2,n)$ is chosen in the way that the points $q_i\in\R^1\subseteq \R^2$ with the order $q_1<q_2<\cdots<q_n$. Let $Q_n=\{q_1,\ldots,q_n\}$ and let $Q_{n,i}=\{q_1,\ldots,\hat q_i,\ldots,q_n\}$. For each $1\leq i\leq n-1$, there is a commutative diagram of fibrations
\begin{diagram}
\R^2\smallsetminus Q_{n-1}&\rInto^{f}& F(\R^2,n)&\rOnto^{d_n}& F(\R^2,n-1)\\
\dInto>{g_i}&&\dTo>{d_i}&&\dTo>{d_i}\\
\R^2\smallsetminus Q_{n-1,i}&\rInto^{h_i}& F(\R^2,n-1)&\rOnto^{d_{n-1}}& F(\R^2,n-2).\\
\end{diagram}
By taking fundamental groups and using the fact that $F(\R^2,m)$ is a $K(\pi,1)$-space, there is a commutative diagram of short exact sequences of groups
\begin{diagram}
\pi_1(\R^2\smallsetminus Q_{n-1})&\rInto^{f_{*}}& P_n=\pi_1(F(\R^2,n))&\rOnto^{d_{n*}}& P_{n-1}=\pi_1(F(\R^2,n-1))\\
\dInto>{g_{i*}}&&\dTo>{d_{i*}}&&\dTo>{d_{i*}}\\
\pi_1(\R^2\smallsetminus Q_{n-1,i})&\rInto^{h_{i*}}& P_{n-1}=\pi_1(F(\R^2,n-1))&\rOnto^{(d_{n-1})_*}& P_{n-2}=\pi_1(F(\R^2,n-2)).\\
\end{diagram}
Let
$$
R'_i=\Ker(g_{i*}\colon \pi_1(\R^2\smallsetminus Q_{n-1})\to \pi_1(\R^2\smallsetminus Q_{n-1,i})).
$$
Since $h_{i*}$ is a monomorphism,
$$
f_{*}|\colon R'_i\longrightarrow \Ker(d_{n*}\colon P_n\to P_{n-1})\cap\Ker(d_{i*}\colon P_n\to P_{n-1})
$$
is an isomorphism. Observe that $R'_i$ is the normal closure of the elements in $\pi_1(\R^2\smallsetminus Q_{n-1})$ represented by the small circles around the point $q_i$. By using the terminology of braids, the small circles around the point $q_i$ is represented by ~$t_i^{\pm}$. Thus
$$
\Ker(d_{n*}\colon P_n\to P_{n-1})\cap\Ker(d_{i*}\colon P_n\to P_{n-1})\leq R_i.
$$
On the other hand, since $d_{n*}(t_i)=d_{i*}(t_i)=1$, we have
$$
R_i\leq \Ker(d_{n*}\colon P_n\to P_{n-1})\cap\Ker(d_{i*}\colon P_n\to P_{n-1})
$$
and so
\begin{equation}\label{R_i}
\Ker(d_{n*}\colon P_n\to P_{n-1})\cap\Ker(d_{i*}\colon P_n\to P_{n-1})=R_i
\end{equation}
with the property that
$$
f_*\colon R'_i\longrightarrow R_i
$$
is an isomorphism. If $w\in \bigcap\limits_{i=1}^{n-1}R_i$, then $w\in \mathrm{Im}(f_*)$ and $f_*^{-1}(w)\in R'_i$ for each $1\leq i\leq n-1$. Thus the monomorphism $f_*$ induces an isomorphism
$$
f_*\colon \bigcap_{i=1}^{n-1}R'_i\longrightarrow \bigcap_{i=1}^{n-1} R_i.
$$
By applying Theorem~\ref{theorem4.3} to the punctured disks, we have
$$
R'_1\cap R'_2\cap\cdots\cap R'_{n-1}=[[R'_1,R'_2],\ldots,R'_{n-1}]_S.
$$
It follows that
$$
R_1\cap R_2\cap\cdots\cap R_{n-1}=[[R_1,R_2],\ldots,R_{n-1}]_S.
$$
From equality~(\ref{R_i}), we have
$$
\bigcap_{i=1}^{n-1} R_i=\bigcap_{i=1}^{n-1}\Ker(d_{n*})\cap\Ker(d_{i*})=\bigcap_{j=1}^{n}\Ker(d_{j*}\colon P_n\to P_{n-1})=\Brun_n,
$$
hence the result.
\end{proof}

\subsection{Proof of Theorem~\ref{theorem1.5}}
The key point to proving Theorem~\ref{theorem1.5} is to construct a $K(\pi,1)$-partition for the configuration space $F(S^2,n)$.
Before giving the proof, we need some lemmas. Let $q_1,q_2,\ldots$ be strictly increasing real numbers that lie in the open interval $(-1,1)\subseteq \R^1\subseteq \R^2\subseteq S^2$. The base point of the configuration spaces $F(\R^2,m)\subseteq F(S^2,m)$ is chosen to be $(q_1,\ldots,q_m)$ for each $m\geq 1$.

\begin{lem}\label{lemma5.2}
Let $1\leq i\leq m$ and let
$$
B_i=\{(z_1,\ldots,z_m)\in F(S^2,m)\ | \ z_p\not=\infty\textrm{ for } p\not=i\}.
$$
Let $f\colon F(\R^2,m)\to B_i$ be the inclusion. Then the homomorphism
$$
f_*\colon P_m=\pi_1(F(\R^2,m))\longrightarrow \pi_1(B_i)
$$
is an epimorphism with $\Ker(f_*)=\la\la A_{0,i}\ra\ra^{P_m}$, the normal closure of $A_{0,i}$ in $P_m$.
\end{lem}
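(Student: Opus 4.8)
The plan is to compare $F(\R^2,m)$ and $B_i$ as total spaces of two Fadell--Neuwirth fibrations over the common base $F(\R^2,m-1)$, both obtained by forgetting the $i$th coordinate, and to read off the result from the resulting ladder of short exact sequences on fundamental groups. Write $Q'=\{q_p\mid p\neq i\}$ for the $m-1$ punctures determined by the chosen base point. Forgetting the $i$th point gives the commutative diagram
\begin{diagram}
\R^2\smallsetminus Q' & \rInto^{\iota} & S^2\smallsetminus Q'\\
\dInto & & \dInto\\
F(\R^2,m) & \rInto^{f} & B_i\\
\dOnto>{d_i} & & \dOnto\\
F(\R^2,m-1) & \rEq & F(\R^2,m-1)\\
\end{diagram}
in which the left column is the usual configuration fibration with fiber the punctured plane, and the right column is the fibration whose fiber over a configuration $(w_1,\ldots,w_{m-1})$ is $S^2\smallsetminus\{w_1,\ldots,w_{m-1}\}$; the map $\iota$ on fibers is the inclusion that fills in the point $\infty$. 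The first point to verify is that the right column really is a fibration: this follows from the Fadell--Neuwirth argument, since an isotopy of $\R^2$ moving the $m-1$ finite points extends to an isotopy of $S^2$ fixing $\infty$, which supplies the required local triviality.

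Because the two bases coincide and $F(\R^2,m-1)$ is aspherical, and because both fibers are path-connected, each long exact homotopy sequence collapses to a short exact sequence of fundamental groups, yielding the ladder
\begin{diagram}
1 & \rTo & \pi_1(\R^2\smallsetminus Q') & \rInto & P_m & \rOnto^{d_{i*}} & P_{m-1} & \rTo & 1\\
& & \dOnto>{\iota_*} & & \dTo>{f_*} & & \dEq & & \\
1 & \rTo & \pi_1(S^2\smallsetminus Q') & \rInto & \pi_1(B_i) & \rOnto & P_{m-1} & \rTo & 1\\
\end{diagram}
Here $\pi_1(\R^2\smallsetminus Q')\cong F_{m-1}$ and $\pi_1(S^2\smallsetminus Q')\cong F_{m-2}$, and $\iota_*$ is the canonical surjection $F_{m-1}\twoheadrightarrow F_{m-2}$ obtained by filling in $\infty$. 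The right vertical map is the identity, so a routine diagram chase delivers both halves of the lemma at once: given $y\in\pi_1(B_i)$, lift its image in $P_{m-1}$ to some $x\in P_m$, correct the discrepancy $y\,f_*(x)^{-1}\in\pi_1(S^2\smallsetminus Q')$ using the surjectivity of $\iota_*$, and conclude that $f_*$ is onto; and chasing the squares shows that $\ker f_*$ is exactly the image in $P_m$ of $\ker\iota_*$.

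It remains to identify $\ker\iota_*$. The free group $\pi_1(\R^2\smallsetminus Q')$ is generated by the loops of the $i$th point around the other $m-1$ punctures, which under the fiber inclusion into $P_m$ are precisely the pure braid generators $A_{p,i}$ for $p<i$ and $A_{i,p}$ for $p>i$. Passing to $S^2$ imposes the single relation that the boundary loop encircling all of $Q'$ becomes trivial, and by the definition of $A_{0,i}$ this boundary loop is $A_{0,i}^{-1}$; since normal closures are insensitive to inverses, $\ker\iota_*=\la\la A_{0,i}\ra\ra^{\pi_1(\R^2\smallsetminus Q')}$. Finally $\ker f_*$ is normal in $P_m$ and contains $A_{0,i}$, so $\la\la A_{0,i}\ra\ra^{P_m}\subseteq\ker f_*$; conversely $\ker f_*$ equals the fiber-level normal closure, which is contained in $\la\la A_{0,i}\ra\ra^{P_m}$ because $\pi_1(\R^2\smallsetminus Q')\le P_m$. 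Hence $\ker f_*=\la\la A_{0,i}\ra\ra^{P_m}$, as required. The step I expect to be the main obstacle is this last identification: pinning down the orientation so that the vanishing boundary loop is exactly $A_{0,i}$ in the combinatorial notation, and then confirming that the normal closure taken in the fiber group agrees with the one taken in $P_m$.
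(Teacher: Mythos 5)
Your proof is correct and takes essentially the same route as the paper's: the same ladder of Fadell--Neuwirth fibrations over the common base $F(\R^2,m-1)$ with fibers $\R^2\smallsetminus Q'$ and $S^2\smallsetminus Q'$, the same reduction $\Ker(f_*)=\Ker(\iota_*)$ via the short exact sequences, and the same final two-inclusion comparison of the normal closure of $A_{0,i}$ in the fiber group with that in $P_m$. The only cosmetic difference is in identifying the vanishing boundary loop with $A_{0,i}$: the paper does this by drawing the loop $\omega_i$ as a braid (a picture), while you do it algebraically from the definition $A_{0,i}^{-1}=(A_{1,i}\cdots A_{i-1,i})(A_{i,i+1}\cdots A_{i,m})$, which is equally valid.
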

\begin{proof}
The assertion holds trivially for $m=1$. We may assume that $m\geq 2$. Consider the Fadell-Neuwirth fibration
$$
d_i\colon F(S^2,m)\longrightarrow F(S^2,m-1)\quad (z_1,\ldots,z_n)\mapsto (z_1,\ldots,\hat z_i,\ldots,z_m).
$$
Then $B_i=d_i^{-1}(F(\R^2,m-1))$ by the construction of $B_i$. Consider the commutative diagram of the fibrations
\begin{diagram}
\R^2\smallsetminus Q_{m,i}&\rInto& F(\R^2,m)&\rTo& F(\R^2,m-1)\\
\dInto>{f|}&&\dInto>{f}&&\dEq\\
S^2\smallsetminus Q_{m,i}&\rInto& B_i&\rTo&F(\R^2,m-1),\\
\end{diagram}
where $Q_{m,i}=\{q_1,\ldots,q_{i-1},q_{i+1},\ldots,q_m\}$. Thus there is a commutative diagram of short exact sequence of groups
\begin{diagram}
\pi_1(\R^2\smallsetminus Q_{m,i})&\rInto& \pi_1(F(\R^2,m))&\rOnto& \pi_1(F(\R^2,m-1))\\
\dTo>{f|_*}&&\dTo>{f_*}&&\dEq\\
\pi_1(S^2\smallsetminus Q_{m,i})&\rInto& \pi_1(B_i)&\rOnto&\pi_1(F(\R^2,m-1)).\\
\end{diagram}
Since $f|_*\colon \pi_1(\R^2\smallsetminus Q_{m,i})\to \pi_1(S^2\smallsetminus Q_{m,i})$ is an epimorphism, the middle map
$$
f_*\colon \pi_1(F(\R^2,m))\longrightarrow \pi_1(B_i)
$$
is an epimorphism with $\Ker(f_*)=\Ker(f|_*)$. Note that $\Ker(f|_*)$ is the normal closure of the element $[\omega_i]$ in $\pi_1(\R^2\smallsetminus Q_{m,i})$, where the loop $\omega_i$ is given by the following left picture:
\begin{center}
\epsfig{file=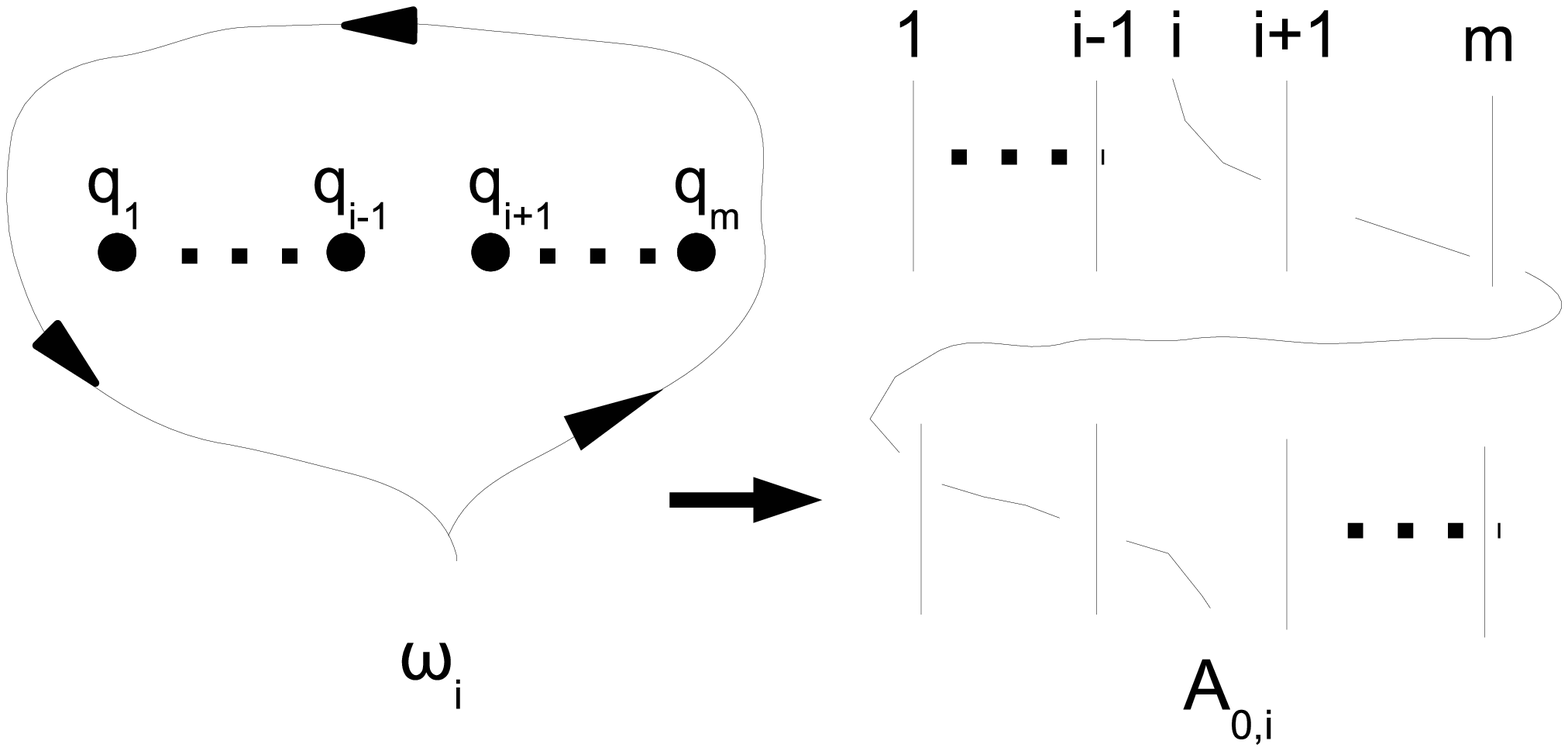, bbllx=0, bblly=0, bburx=488, bbury=72, width=4in,
height=4in, clip=}
\end{center}
\vspace*{-5cm}
In the above picture, the braid $A_{0,i}$ is obtained
from the loop $\omega_i$ by drawing it as a braid. Thus
$[\omega_i]=A_{0,i}$ and so
$$
\Ker(f_*)=\Ker(f|_*)=\la\la A_{0,i}\ra\ra^{P_m}\cap \pi_1(\R^2\smallsetminus Q_{m,i})\leq \la\la A_{0,i}\ra\ra^{P_m}.
$$
On the other hand, since $\Ker(f_*)$ is normal in $P_m$ with $A_{0,i}\in \Ker(f_*)$, we have $\la\la A_{0,i}\ra\ra^{P_m}\leq \Ker(f_*)$. It follows that
$$
\Ker(f_*)=\la\la A_{0,i}\ra\ra^{P_m},
$$
hence the result holds.
\end{proof}

\begin{lem}\label{lemma5.3}
Let $I=\{i_1,i_2,\ldots,i_k\}$ be a subset of $\{1,2,\ldots,m\}$. Let
$$
B_I=\{(z_1,\ldots,z_m)\in F(S^2,m) \ | \ z_p\not=\infty\textrm{ for } p\not\in I\}
$$
and let $f\colon F(\R^2,m)\to B_I$ be the inclusion. Then the homomorphism
$$
f_*\colon \pi_1(F(\R^2,m))\to \pi_1(B_I)
$$
is an epimorphism whose kernel
$
\Ker(f_*)=\la\la A_{0,i}\ | \ i\in I\ra\ra^{P_m}.
$
\end{lem}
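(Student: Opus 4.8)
The plan is to induct on $k=|I|$, with the case $k=1$ being exactly Lemma~\ref{lemma5.2}. Write $I=I'\sqcup\{i_k\}$ with $|I'|=k-1$. The decisive move is to forget a coordinate whose index lies \emph{inside} $I$: let $d_{i_k}\colon F(S^2,m)\to F(S^2,m-1)$ be the Fadell--Neuwirth fibration deleting the $i_k$-th point. Since membership in $B_I$ only constrains $z_p$ for $p\notin I$ and $i_k\in I$, this constraint is untouched by deleting $z_{i_k}$; hence $B_I=d_{i_k}^{-1}(C_I)$ where $C_I:=d_{i_k}(B_I)$ is a space of exactly the same shape as $B_{I'}$ but sitting in $F(S^2,m-1)$. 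Being a full preimage, $d_{i_k}\colon B_I\to C_I$ is again a fibration, with fiber over the all-finite base point the punctured sphere $S^2\smallsetminus Q_{m,i_k}$; restricting to all-finite configurations recovers $F(\R^2,m)\to F(\R^2,m-1)$ with fiber $\R^2\smallsetminus Q_{m,i_k}$. Thus the inclusion $f$ fits into a commutative ladder of two fibrations whose fiber map is the inclusion $\R^2\smallsetminus Q_{m,i_k}\hookrightarrow S^2\smallsetminus Q_{m,i_k}$ and whose base map is the inclusion $g\colon F(\R^2,m-1)\hookrightarrow C_I$.

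I would first check that every space occurring is aspherical, by a parallel induction: $C_I$ fibers over a space with one fewer allowed index and aspherical punctured-sphere fiber, so $\pi_2(C_I)=0$; hence both rows of the ladder give \emph{short} exact sequences of fundamental groups. The base map $g$ is precisely the map whose kernel the inductive hypothesis computes in $F(S^2,m-1)$, so $g_*$ is onto with $\Ker g_*=\la\la A_{0,j}\mid j\in I'\ra\ra^{P_{m-1}}$, while the fiber inclusion is onto with kernel the normal closure of the loop around $\infty$, identified in Lemma~\ref{lemma5.2} as $\la\la A_{0,i_k}\ra\ra$. A four-lemma chase on the ladder then makes $f_*$ an epimorphism, and the induced sequence $1\to\la\la A_{0,i_k}\ra\ra\to\Ker f_*\to\la\la A_{0,j}\mid j\in I'\ra\ra\to 1$ exhibits $\Ker f_*$ as an extension of the base kernel by the fiber kernel.

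It then remains to identify this extension with $\la\la A_{0,j}\mid j\in I\ra\ra^{P_m}$. For the inclusion $\la\la A_{0,j}\mid j\in I\ra\ra\subseteq\Ker f_*$ I would factor $f$ through $B_{I'}\subseteq B_I$ (inside $F(S^2,m)$): the inductive hypothesis kills $A_{0,j}$ for $j\in I'$ already in $\pi_1(B_{I'})$, and $A_{0,i_k}$ dies through the fiber, so by normality all of $\la\la A_{0,j}\mid j\in I\ra\ra$ lies in $\Ker f_*$. For the reverse inclusion, given $w\in\Ker f_*$ one has $d_{i_k*}(w)\in\Ker g_*=\la\la A_{0,j}\mid j\in I'\ra\ra^{P_{m-1}}$; using the braid identity $d_{i_k*}(A_{0,j})=A_{0,j}$ for $j\neq i_k$ (clear from the picture of Lemma~\ref{lemma5.2}, since deleting strand $i_k$ leaves the loop of strand $j$ around the remaining punctures and $\infty$ unchanged) together with surjectivity of $d_{i_k*}$, one finds an element of $\la\la A_{0,j}\mid j\in I'\ra\ra^{P_m}$ with the same image as $w$. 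Correcting $w$ by it lands in the fiber, hence in $\la\la A_{0,i_k}\ra\ra^{P_m}$, so $w\in\la\la A_{0,j}\mid j\in I\ra\ra^{P_m}$, closing the induction.

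The step I expect to be the main obstacle is packaging the fibrations correctly. One must forget an index inside $I$, so that $B_I$ is a genuine preimage and the projection is honestly a fibration; the more tempting idea of projecting $B_{I'}$ itself fails, because its fiber jumps between the $\R^2$- and $S^2$-punctured types according to whether some point already occupies $\infty$. The only genuinely computational ingredient is the behavior $d_{i_k*}(A_{0,j})=A_{0,j}$ of the extended generators under strand removal, but this is dictated by the geometric description of $A_{0,j}$ established in Lemma~\ref{lemma5.2}.
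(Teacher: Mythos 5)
Your route is genuinely different from the paper's, and it is worth contrasting them. The paper's proof is three lines: $B_I=\bigcup_{s=1}^k B_{i_s}$, and any intersection of two or more of the $B_{i_s}$ is exactly $F(\R^2,m)$, so Van Kampen's theorem together with Lemma~\ref{lemma5.2} gives at once that $f_*$ is onto with kernel $\prod_{s=1}^k\la\la A_{0,i_s}\ra\ra=\la\la A_{0,i}\mid i\in I\ra\ra^{P_m}$ --- no fibrations, no asphericity input, no induction, and the argument is uniform in $I$. Your Fadell--Neuwirth induction is heavier but is essentially sound for \emph{proper} subsets $I\subsetneq\{1,\ldots,m\}$, which is the only case invoked in the proof of Theorem~\ref{theorem1.5}: there your parallel asphericity induction is valid (every fiber in the deletion tower is $S^2$ minus at least $m-k\geq 1$ points, and the tower ends at $F(\R^2,m-k)$), both rows of your ladder are then short exact, the identity $d_{i_k*}(A_{0,j})=A_{0,j}$ (up to relabelling of indices) is correct, and your two chases go through. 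Note also that the induction among proper subsets is self-contained, since $I'$ is again proper in the index set of $F(S^2,m-1)$.

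The gap is that the lemma as stated allows $I=\{1,2,\ldots,m\}$, and there your argument breaks, not cosmetically. In that case $C_I=F(S^2,m-1)$ and the asphericity induction collapses (the tower now ends at $F(S^2,1)=S^2$); worse, the left-exactness you rely on genuinely fails for small $m$. Concretely, take $m=3$, $I=\{1,2,3\}$, $i_k=3$. Then $\pi_1(F(S^2,3))\cong\Z/2$, so $A_{1,3}^2\in\Ker f_*\cap\Ker d_{3*}$. On the other hand, $P_3=\la A_{1,3},A_{2,3}\ra\times\la A_{1,2}A_{1,3}A_{2,3}\ra$, the first factor free of rank $2$ and the second the center; hence the normal closure of $A_{0,3}=(A_{1,3}A_{2,3})^{-1}$ in $P_3$ is the normal closure of $A_{1,3}A_{2,3}$ taken inside the free factor, and $A_{1,3}^2$ has image $2\not=0$ in $\la A_{1,3},A_{2,3}\ra/\la\la A_{1,3}A_{2,3}\ra\ra\cong\Z$. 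So $A_{1,3}^2\notin\la\la A_{0,3}\ra\ra^{P_3}$, which contradicts both your asserted extension $1\to\la\la A_{0,i_k}\ra\ra\to\Ker f_*\to\la\la A_{0,j}\mid j\in I'\ra\ra\to 1$ and the final step of your chase (``lands in the fiber, hence in $\la\la A_{0,i_k}\ra\ra^{P_m}$''). The culprit is $\pi_2(C_I)=\pi_2(F(S^2,2))\cong\Z$ mapping nontrivially to $\pi_1$ of the fiber, so the fiber inclusion into $B_I$ is not $\pi_1$-injective. For $m\geq 4$ the full-set case could be patched by citing $\pi_2(F(S^2,m-1))=0$ (Fadell--Van Buskirk), an input your induction does not itself produce; for $m=3$ the intermediate claim is simply false and a different argument is unavoidable. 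So either restrict your induction to proper $I$ and treat $I=\{1,\ldots,m\}$ separately (the paper's Van Kampen argument covers all cases at once), or the proof of the lemma as stated is incomplete.
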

\begin{proof}
Note that
$$
B_I=\bigcup_{s=1}^k B_{i_s}.
$$
By Lemma~\ref{lemma5.2}, $P_m=\pi_1(F(\R^2,m))\to \pi_1(B_{i_s})$ is an epimorphism whose kernel is given by $\la\la A_{0,i_s}\ra\ra^{P_m}$ for each $1\leq s\leq k$. From Van Kampen's Theorem,
$$
f_*\colon P_m=\pi_1(F(\R^2,m))\longrightarrow \pi_1(B_I)
$$
is an epimorphism with
$$
\Ker(f_*)=\prod_{s=1}^k \la\la A_{0,i_s}\ra\ra=\la\la A_{0,i_s}\ | \ 1\leq s\leq k\ra\ra^{P_m},
$$
hence the result.
\end{proof}

\begin{proof}[Proof of Theorem~\ref{theorem1.5}]
 Let $A=F(\R^2,m)$ be the subspace of $F(S^2,m)$. Let
$$
A_i=\{(z_1,z_2,\ldots,z_m)\in F(S^2,m)\ | \ z_p\not=\infty \textrm{ for } p\not\in \Lambda_i\},
$$
where the $\Lambda_i$ are as defined in the statement of the Theorem.
Since $\Lambda_i\cap \Lambda_j=\emptyset$ for $i\not=j$, we have
$$
A_i\cap A_j=F(\R^2,m)=A
$$
for $i\not=j$. From the assumption that $\bigcup_{i=1}^n \Lambda_i=\{1,2,\ldots,m\}$, we have
$$
\bigcup_{i=1}^n A_i=F(S^2,m).
$$
Thus $(A_1,A_2,\ldots,A_n)$ is a cofibrant $n$-partition \footnote{Similar to footnote$^2$, one needs to do some modifications such that the cofibrant hypothesis holds: Take a triangulation on $F(S^2,m)$ and deform linearly $A$ and each $A_i$ into certain subcomplexes with necessary subdivisions for having a cofibrant $n$-partition of $F(S^2,m)$.}  of the space $F(S^2,m)$ relative to $F(\R^2,m)$. For a subset $I=\{i_1,i_2,\ldots,i_k\}\subsetneq\{1,2,\ldots,n\}$, let
$$
A_I=\bigcup_{s=1}^k A_{i_s}
$$
with $A_{\emptyset}=A$. We now check that $A_I$ is a $K(\pi,1)$-space for $I\subsetneq \{1,2,\ldots,n\}$. Let
$$
J=\{j_1,j_2,\ldots,j_{t}\}=\{1,2,\ldots,m\}\smallsetminus \left(\bigcup_{s=1}^k\Lambda_{i_s}\right)
$$
with $j_1<j_2<\ldots<j_t$. Since $I\not=\{1,2,\ldots,n\}$, $J$ is not empty. Let $(z_1,\ldots,z_m)\in F(S^2,m)$. Observe that $(z_1,\ldots,z_m)\in A_I$ if and only if $z_{j_s}\not=\infty$ for $1\leq s\leq t$.  Consider the Fadell-Neuwirth fibration~\cite{FN}
$$
d_I\colon F(S^2,m)\longrightarrow F(S^2,t).
$$
There is a pull-back diagram
\begin{diagram}
A_I&\rInto& F(S^2,m)\\
d_I|_{A_I}\dTo&\mathrm{pull}&\dTo>{d_I}\\
F(\R^2,t)&\rInto&F(S^2,t)\\
\end{diagram}
with a fibration
$$
F(S^2\smallsetminus Q_t, m-t)\rTo A_I\rTo F(\R^2,t),
$$
where $Q_t$ is a set of $t$ distinct points in $S^2$. Since $t\geq 1$,
$$
F(S^2\smallsetminus Q_t,m-t)\cong F(\R^2\smallsetminus Q_{t-1}, m-t)
$$
is a $K(\pi,1)$-space. Together with the fact that the base space $F(\R^2,t)$ is a $K(\pi,1)$-space, the total space $A_I$ is a $K(\pi,1)$-space for any $I\subsetneq \{1,2,\ldots,n\}$.

By Lemma~\ref{lemma5.3}, the inclusion $f\colon A=F(\R^2,m)\hookrightarrow A_i$ induces an epimorphism
$$
f_*\colon P_m=\pi_1(A)\rOnto \pi_1(A_i)
$$
whose kernel
$$
R_i=\Ker(f_*)=\la\la A_{0,j}\ | \ j\in \Lambda_i\ra\ra^{P_m}.
$$
The assertion follows from Theorem~\ref{theorem1.3}.
\end{proof}

\noindent\small{\textbf{Acknowledgements.}} The authors would like to thank the referee for pointing out some errors in the original manuscript. The authors also would like to thank Fred Cohen for his help on the writing of this article.

\end{document}